\DeclareMathAlphabet{\mathpzc}{OT1}{pzc}{m}{it}
\newcommand{\marginextend}[1]{ \addtolength{\oddsidemargin}{-#1}  \addtolength{\evensidemargin}{-#1}
	\addtolength{\textwidth}{#1}\addtolength{\textwidth}{#1}}
\newcommand{\updownextend}[1]{ \addtolength{\topmargin}{-#1}  \addtolength{\textheight}{#1}
	\addtolength{\textheight}{#1}}
\DeclareFontFamily{OT1}{pzc}{}
\DeclareFontShape{OT1}{pzc}{m}{it}{<-> s * [1.10] pzcmi7t}{}
\DeclareMathAlphabet{\mathpzc}{OT1}{pzc}{m}{it}
\DeclareSymbolFont{SY}{U}{psy}{m}{n}
\DeclareMathSymbol{\emptyset}{\mathord}{SY}{'306}
\theoremstyle{plain}
\newtheorem{thm}{Theorem}[section]
\newtheorem*{thm*}{Theorem}
\newtheorem{cor}[thm]{Corollary}
\newtheorem{lem}[thm]{Lemma}
\newtheorem{prop}[thm]{Proposition}
\newtheorem{defn}[thm]{Definition}
\newtheorem{rem}[thm]{Remark}
\newtheorem{ex}[thm]{Example}
\newtheorem{observation}[equation]{Observation}
\newtheoremstyle{named}{}{}{\itshape}{}{\bfseries}{.}{.5em}{#1 \thmnote{#3}}
\theoremstyle{named}
\numberwithin{equation}{section}
\def\N{\mathbb{N}}
\def\beq{\begin{eqnarray}}
	\def\eeq{\end{eqnarray}}
\def\beqa{\begin{eqnarray*}}
	\def\eeqa{\end{eqnarray*}}
\newcommand{\be}{\begin{equation}}
	\newcommand{\ee}{\end{equation}}
\newcommand{\bea}{\begin{eqnarray}}
	\newcommand{\eea}{\end{eqnarray}}
\newcommand{\Bea}{\begin{eqnarray*}}
	\newcommand{\Eea}{\end{eqnarray*}}
\newcounter{cnt1}
\newcounter{cnt2}
\newcounter{cnt3}
\newcommand{\blr}{\begin{list}{$($\roman{cnt1}$)$}
		{\usecounter{cnt1} \setlength{\topsep}{0pt}
			\setlength{\itemsep}{0pt}}}
	\newcommand{\bla}{\begin{list}{$($\alph{cnt2}$)$}
			{\usecounter{cnt2} \setlength{\topsep}{0pt}
				\setlength{\itemsep}{0pt}}}
		\newcommand{\bln}{\begin{list}{$($\arabic{cnt3}$)$}
				{\usecounter{cnt3} \setlength{\topsep}{0pt}
					\setlength{\itemsep}{0pt}}}
			\newcommand{\el}{\end{list}}
		\newcommand{\hnorm}[1]{\left\lVert#1\right\rVert}
		\title{Metrics on $C^{\ast}$-algebras of étale groupoids from length functions }
		\author[A. Chattopadhyay]{Arnab Chattopadhyay}
		\author[Md A. Hossain]{Md Amir Hossain}
		\author[S. Joardar]{Soumalya Joardar}
        \address[A. Chattopadhyay]{Indian Institute of Science Education And Research Kolkata, Mohanpur 741246, Nadia, West Bengal, India} \email{ac23rs002@iiserkol.ac.in}
        \address[Md A. Hossain]{The Institute of Mathematical Sciences, A CI of Homi Bhabha  National Institute,
			4th Cross Street, CIT Campus, Taramani, Chennai, 600113, India.} \email{mdamirhossain18@gmail.com}
            \address[S. Joardar]{Indian Institute of Science Education And Research Kolkata, Mohanpur 741246, Nadia, West Bengal, India} \email{soumalya@iiserkol.ac.in}
\begin{document}
			\begin{abstract}	   
		We show that for an étale groupoid with compact unit space, the natural Dirac type operator from a continuous length function produces a natural pseudo-metric on the state space of the corresponding reduced $C^{\ast}$-algebra. For a transformation groupoid with a continuous, proper length function with rapid decay, the state space decomposes into genuine metric spaces with a uniform finite diameter fibred over the state space of the compact unit space. Moreover, when the unit space of the transformation groupoid has finitely many points, the metric on each fibre metrizes the weak$^\ast$-topology. 
			\end{abstract}
			\maketitle
			\section{Introduction}
			One of the key aspects of the spectral triple formalism of noncommutative geometry pioneered by Alan Connes is that it encodes the metric data of the $C^{\ast}$-algebra of the triple. The motivation behind the idea is that the geodesic distance between two points of a compact Riemannian spin manifold can be recovered from its associated spectral triple. Recall that a general spectral triple is a data $(\mathcal{A},\mathcal{H}, D)$ where $\mathcal A$ is a $\ast$-subalgebra of $B(\mathcal H)$ and $D$ is some unbounded operator acting on a Hilbert space $\mathcal H$ such that $[D,a]\in B(\mathcal{H})$ for all $a\in\mathcal{A}$. Then the state space of the $C^{\ast}$-completion of $\mathcal{A}$ can be equipped with the following distance (see \cite{Connes-1989-Compact-metric-sp-Fred-mod-Hyperfinteness}) 
			\begin{displaymath}
				d(\phi,\psi):={\rm sup} \{|\phi(a)-\psi(a)| : \hnorm{[D,a]}\leq 1\},
			\end{displaymath}
			where $\phi$ and $\psi$ are states on the $C^{\ast}$-completion of $\mathcal A.$ The above formula can be generalized in the sense that if a $C^{\ast}$-algebra $A$ is equipped with a seminorm $L$, then a similar formula as above can be given for a distance between two states where $\hnorm{[D,a]}$ is replaced by $L(a)$ in the above formula. In fact, for a classical compact metric space $(X,d)$ one can define a seminorm $L$ on $C(X)$ by the formula 
			\begin{displaymath}
				L (f):={\sup\limits}_{x\neq y}\frac{|f(x)-f(y)|}{d(x,y)}.  
			\end{displaymath}
			This seminorm recovers the metric data of the space $X$.
			With this observation, Marc Rieffel defined a compact quantum metric space as a pair $(A,L)$ where \(A\) is a \(C^*\)-algebra and $L$ is a densely defined Lip-norm (see~\cite[Definition 2.2]{Article}) such that the corresponding metric induces the weak$^\ast$-topology on the state space $\mathcal{S}(A)$. Then a number of works followed to find concrete examples of such compact quantum metric spaces over the last few decades (see \cite{Ozawa_Rieffel_2005, Rieffel-2002-Group-C-alg-as-compact-quan-metr-sp, Connes-1989-Compact-metric-sp-Fred-mod-Hyperfinteness, Rieffel-1999-Metrics-on-state-spaces, Long-Wu-2017-Twisted-group-C-alg-as-CQMS, Rieffel-1998-Metrics-on-state-from-action-of-cmpt-gp, Article, DKJK, Latrmolire2018TheGP, FL, Quaegebeur2010IsometricCO}). The quantum metric space structure has also been useful in defining and studying the quantum analogue of Gromov-Hausdorff distances between compact quantum metric spaces (see \cite{Latrmolire2018TheGP, Article}). In fact, this was one of the motivations to define compact quantum metric spaces.  Apart from finding examples of compact quantum metric spaces, a notion of quantum group of isometries with respect to the compact quantum metric space structure has also been studied in \cite{Quaegebeur2010IsometricCO}.\\
			\indent A rich supply of examples of compact quantum metric spaces has come from the reduced group $C^{\ast}$-algebras (see \cite{Long-Wu-2017-Twisted-group-C-alg-as-CQMS, Rieffel-1998-Metrics-on-state-from-action-of-cmpt-gp, Ozawa_Rieffel_2005, Rieffel-2002-Group-C-alg-as-compact-quan-metr-sp} etc). The Lip-norms mostly come from the natural spectral triples from the length functions of the underlying groups. Recently, examples have been constructed on the twisted reduced group $C^{\ast}$-algebras equipped with proper length functions with the rapid decay property. The key idea is to define a sequence of Lipschitz-seminorms $L^{k}$ arising out of the spectral triple induced by the length function and to show that there is a large enough $k$ such that the Lip-seminorms become Lip-norms. \\
			\indent Now, with the introduction of a length function with the rapid decay property on groupoids (see \cite{Weygandt-2024-Rapid-decay-for-etale-gpd}), it is natural to seek the extensions of the results on the reduced group $C^{\ast}$-algebras to the realm of reduced groupoid $C^{\ast}$-algebras and twisted versions of them. We set out to explore the idea in this paper. In order to work in the unital set-up, we restrict our attention to the reduced $C^{\ast}$-algebras of étale groupoids with compact unit spaces such that the groupoids have proper length functions with the rapid decay property. We mostly follow the footsteps of \cite{Long-Wu-2017-Twisted-group-C-alg-as-CQMS, Rieffel-1998-Metrics-on-state-from-action-of-cmpt-gp}. However, the obvious approach hits a stumbling block as a natural sequence of seminorms $L^k$ constructed out of the length functions have $C(X)$ as the kernel of \(L^k\), where $X$ is the compact unit space of the groupoid. Then the metric on the state space becomes a pseudo-metric. The state space turns out to be a disjoint union of weak$^\ast$-compact subspaces indexed by the probability measures on the compact unit space $X$. The distance between any two states coming from two different fibres becomes $+\infty$. Therefore, the weak$^\ast$-topology being connected, cannot be metrized by such a metric. Then the relevant question becomes whether the weak$^\ast$-topologies on each fibre can be metrized by the metric or not. We show that when the étale groupoid is a transformation groupoid equipped with a continuous, proper length function with rapid decay property, then each fibre has a uniform finite diameter with the induced metric. Moreover, if the unit space has finitely many points, then the weak$^\ast$-topology of each fibre is metrized by the metric induced by a seminorm $L^k$ for a large enough $k$.

			\section{Main results}
			
			We shall only work with {\'e}tale groupoid. We briefly recall the definitions of groupoids and {\'e}tale groupoids. For details we refer the reader to~\cite{renault2006groupoid,sims2020operator,williams2019tool,brownc}.
			
			A groupoid is a small category \(\mathcal{G}\) where every element has an inverse. The set of
			units in the groupoid \(\mathcal{G}\) is denoted by \(\mathcal{G}^{(0)}\), and the set of all composable pairs is
			denoted by \(\mathcal{G}^{(2)}\). The multiplication operation is defined as a mapping from \(\mathcal{G}^{(2)}\) to
			\(\mathcal{G}\) and the inverse is defined from \(\mathcal{G}\to \mathcal{G}\). 
			Additionally, there are two functions: the range map \(r\) and the source map \(s\), both of which are defined as \(r, s \colon \mathcal{G} \to \mathcal{G}^{(0)}\). Specifically, these maps are given by \(r(\gamma) = \gamma \gamma^{-1}\) and \(s(\gamma) = \gamma^{-1} \gamma\) for \(\gamma \in \mathcal{G}\).
			For a given unit \(x \in \mathcal{G}^{(0)}\), we define the sets \(\mathcal{G}^x = r^{-1}(x)\) and \(\mathcal{G}_x = s^{-1}(x)\), referred to as the range and source fibres, respectively.
			A groupoid is called topological if it has a topology such that both the multiplication
			and inversion maps are continuous with respect to this topology. We call \(\mathcal{G}\)
			a locally compact groupoid if its topology is locally compact and Hausdorff. The
			groupoid \(\mathcal{G}\) is \emph{{\'e}tale} if the range map \(r\) (and hence the source map \(s\)) is a local homeomorphism.
			
		%	\begin{defn} (\cite[Definition 8.4.1]{sims2020operator})
		%		A topological groupoid $\mathcal G$ is said to be étale if the range map $r : \mathcal G \longrightarrow \mathcal G^{(0)}$ (and hence the source map $s : \mathcal G \longrightarrow \mathcal G^{(0)}$) is a local homeomorphism, where $\mathcal G^{(0)}$ stands for the unit space of the groupoid $\mathcal G.$
		%	\end{defn}
			
			\begin{ex}\label{ex2}%(\cite[Example 8.1.15, Example 8.3.6 and Example  8.4.5]{sims2020operator})
				Let $\Gamma$ be a discrete group acting on a locally compact Hausdorff topological space $X$ by homeomorphisms. We consider the transformation groupoid $\mathcal G := \Gamma \ltimes X$. The underlying space of the groupoid is \(\Gamma \times X\). Define the multiplication and inverse maps by $(g, h \cdot x) (h, x) = (gh, x)$ and $(g, x)^{-1} = (g^{-1}, g \cdot x)$ respectively.
				The range map $r$ and the source map $s$ are given by $r(g,x) = g \cdot x$ and $s(g,x) = x$ respectively. The groupoid $\mathcal G$ is étale and the unit space $\mathcal G^{(0)}$ given by $\{e\} \times X : = \{(e,x)\ :\ x \in X\} \simeq X.$
			\end{ex}
			
			Given an {\'e}tale groupoid, one can attach two \(C^*\)-algebras, namely, the full and the reduced groupoid \(C^*\)-algebras. In this paper, we shall be interested in the reduced groupoid $C^{\ast}$-algebras. We briefly recall the construction of groupoid \(C^*\)-algebra. The reader can refer to \cite[Section 5.6]{brownc} and \cite[Section 2]{Weygandt-2024-Rapid-decay-for-etale-gpd} for details. To that end, let $C_c (\mathcal G)$ be the $\ast$-algebra of compactly supported continuous functions on $\mathcal G$ with the multiplication and involution given by 
			\[
			 (f \ast \xi) (\gamma) = \sum\limits_{\beta \in \mathcal G_{s(\gamma)}} f \left (\gamma \beta^{-1} \right ) \xi (\beta) \quad \textup{and} \quad f^{\ast} (\gamma) = \overline {f \left (\gamma^{-1} \right )}
			\]
			for $f, \xi \in C_c (\mathcal G)$ and $\gamma \in \mathcal G$. We define a function $\mathcal E \colon  \mathcal G \to \mathbb C$ by 
			\begin{equation}\label{eq:0} \mathcal E (\gamma) = \begin{cases} 1, \quad \text {if}\ \gamma \in \mathcal G^{(0)}, \\ 0, \quad \text {otherwise.} \end{cases}\end{equation}
			 
			Note that the unit space $\mathcal G^{(0)}$ of an étale groupoid $\mathcal G$ is always clopen (provided $\mathcal G$ is Hausdorff). Hence if $\mathcal G^{(0)}$ is compact then $\mathcal E \in C_c (\mathcal G)$. It is straightforward to verify that $f \ast \mathcal E = \mathcal E \ast f$ for every $f \in C_c (\mathcal G)$, that is, $\mathcal E$ is a multiplicative identity of $C_c (\mathcal G).$ We define a $C_0 (\mathcal G^{(0)})$-valued inner product $\langle \langle \cdot, \cdot \rangle \rangle$ on $C_c (\mathcal G)$ by
			$$\langle \langle \xi, \eta \rangle \rangle (x) = \sum\limits_{\gamma \in \mathcal G_x} \overline {\xi (\gamma)}\ {\eta (\gamma)}$$ 
			for $\xi, \eta \in C_c (\mathcal G)$ and $x \in \mathcal G^{(0)}.$ Let $L^2 (\mathcal G)$ be the Hilbert-$C_0 (\mathcal G^{(0)})$-module obtained by the completion of $C_c (\mathcal G)$ with respect the norm $\| \cdot \|$ given by 
			$$\|f\| : = \biggr ( \sup\limits_{x \in \mathcal G^{(0)}}  \sum\limits_{\gamma \in \mathcal G_x} \left \lvert f(\gamma) \right \rvert^2 \biggr )^{\frac {1} {2}},$$
			 for $f \in C_c (\mathcal G).$ Consider the representation $\lambda\colon C_c (\mathcal G) \to \mathbb B (L^2 (\mathcal G))$ of $C_c (\mathcal G)$ into the space of adjointable operators on $L^2 (\mathcal G)$ given by $\lambda (f) (\xi) = f \ast \xi,$ for $\xi \in C_c (\mathcal G).$ Then it is well-known that $\|\lambda (f)\|_{\text {adj}} \leq \|f\|_I$ for $f \in C_c (\mathcal G),$ where 
			 $$\|f\|_I : = \max \biggr \{\sup\limits_{x \in \mathcal G^{(0)}} \sum\limits_{\gamma \in \mathcal G_x} \left \lvert f (\gamma) \right \rvert,\ \sup\limits_{x \in \mathcal G^{(0)}} \sum\limits_{\gamma \in \mathcal G_x} \left \lvert f(\gamma^{-1}) \right \rvert \biggr \}.$$ 
			 We define the reduced norm $\|\cdot \|_{\text {red}}$ on $C_c (\mathcal G)$ by 
			$$\|f\|_{\text {red}} : = \|\lambda (f) \|_{\text {adj}}.$$ 
			The completion of $C_c (\mathcal G)$ with respect to the reduced norm is known as the reduced groupoid $C^{\ast}$-algebra, which is denoted by $C_r^{\ast} (\mathcal G).$ It is easy to see that $C_c (\mathcal G)$ can be identified with its image in $L^2 (\mathcal G).$
			
			\begin{defn}\label{def-length-fun}\cite[Definition 2.5]{Weygandt-2024-Rapid-decay-for-etale-gpd}
				Let $\mathcal G$ be an {\'e}tale groupoid. By a length function we mean a continuous map $\ell : \mathcal G \to [0, +\infty)$ satisfying the following conditions:
				
				\vspace{2mm}
				
				$(1)$ $\ell (x) = 0$ if and only if $x \in \mathcal G^{(0)},$
				
				$(2)$ $\ell \left (\gamma^{-1} \right ) = \ell (\gamma)$ for any $\gamma \in \mathcal G,$
				
				$(3)$ $\ell (\gamma \eta) \leq \ell (\gamma) + \ell (\eta)$ for any $(\gamma, \eta) \in \mathcal G^{(2)}.$
				%	\vspace{2mm}
				%	Now let us suppose that $\mathcal G$ be a topological groupoid. We say that $\ell$ is continuous if it is continuous as a map from $\mathcal G$ to $[0, \infty).$
			\end{defn}
			The length function \(\ell\) is called proper if for every subset $K \subseteq \mathcal G \setminus \mathcal G^{(0)},$ finiteness of the quantity $\sup \{\ell (\gamma) : \gamma \in K\}$ implies that $K$ is pre-compact (see \cite[Definition 2.5]{Weygandt-2024-Rapid-decay-for-etale-gpd}).

			Let $\mathcal G$ be an étale groupoid and let $\ell$ be a length function on $\mathcal G.$ We let $M_{\ell}$ denote the (possibly unbounded) densely defined operator on the Hilbert-$C_0(\mathcal G^{(0)})$-module $L^2 (\mathcal G)$ of pointwise multiplication by $\ell$ on $C_c (\mathcal G)$.
			 Specifically, for \(f \in C_c(\mathcal{G})\) and \(\gamma \in \mathcal{G}\), we have
			 	\(M_{\ell}(f)(\gamma) = \ell(\gamma)f(\gamma). \) The operator \(M^k_{\ell}\) denotes the \(k\)-times composition of \(M_{\ell}\).
		
			\indent Next, we define the derivation associated with the Dirac operator $D = M_{\ell}$ as follows: 
			$$\Delta (f) = \left [D, \lambda (f) \right ] = \left [M_{\ell}, \lambda (f) \right ],$$ for all $f \in C_c (\mathcal G).$ For $k \in \mathbb N,$ we define $\Delta^k (f)$ inductively by 
			$$\Delta^k (f) = \bigl [D, \Delta^{k-1} (f) \bigr ]$$ 
			for all $f \in C_c (\mathcal G).$ For any $f,\xi\in C_c (\mathcal G)$ and $\gamma \in \mathcal G,$ we have
			\Bea
			\Delta (f) (\xi) (\gamma)  & = &  M_{\ell} (\lambda (f) (\xi)) (\gamma) - \lambda (f) (M_{\ell} (\xi)) (\gamma) \\ & = & \ell (\gamma) \lambda (f) (\xi) (\gamma) - (f \ast M_{\ell} (\xi)) (\gamma) \\  & = &  \ell (\gamma) \sum\limits_{\beta \in \mathcal G_{s(\gamma)}} f \left (\gamma \beta^{-1} \right ) \xi (\beta) - \sum\limits_{\beta \in \mathcal G_{s(\gamma)}} f \left (\gamma \beta^{-1} \right ) M_{\ell} (\xi) (\beta) \\ &  = &  \ell (\gamma) \sum\limits_{\beta \in \mathcal G_{s(\gamma)}} f \left (\gamma \beta^{-1} \right ) \xi (\beta) - \sum\limits_{\beta \in \mathcal G_{s(\gamma)}} f \left (\gamma \beta^{-1} \right ) \ell (\beta) \xi (\beta) \\ & = & \sum\limits_{\beta \in \mathcal G_{s(\gamma)}} \left (\ell (\gamma) - \ell (\beta) \right ) f \left (\gamma \beta^{-1} \right ) \xi (\beta).
			\Eea
	%		\Bea
	%		\Delta (f) (\xi) (\gamma)  & = & M_{\ell} (\lambda (f) (\xi)) (\gamma) - \lambda (f) (M_{\ell} (\xi)) (\gamma) \\ & = & \ell (\gamma) \lambda (f) (\xi) (\gamma) - (f \ast M_{\ell} (\xi)) (\gamma) \\ & = & \ell (\gamma) \sum\limits_{\beta \in \mathcal G_{s(\gamma)}} f \left (\gamma \beta^{-1} \right ) \xi (\beta) - \sum\limits_{\beta \in \mathcal G_{s(\gamma)}} f \left (\gamma \beta^{-1} \right ) M_{\ell} (\xi) (\beta) \\ & = & \ell (\gamma) \sum\limits_{\beta \in \mathcal G_{s(\gamma)}} f \left (\gamma \beta^{-1} \right ) \xi (\beta) - \sum\limits_{\beta \in \mathcal G_{s(\gamma)}} f \left (\gamma \beta^{-1} \right ) \ell (\beta) \xi (\beta) \\ & = & \sum\limits_{\beta \in \mathcal G_{s(\gamma)}} \left (\ell (\gamma) - \ell (\beta) \right ) f \left (\gamma \beta^{-1} \right ) \xi (\beta).
	%		\Eea
		
			\begin{lem}
				Let \(f,\xi\in C_c(\mathcal{G})\). Then for \(\gamma \in \mathcal{G}\) and \(k\in \N\), we have
				\begin{equation}
					\label{deltakformula}
					\Delta^k (f) (\xi) (\gamma) = \sum\limits_{\beta \in \mathcal G_{s(\gamma)}} \left (\ell (\gamma) - \ell (\beta) \right )^k f \left (\gamma \beta^{-1} \right ) \xi (\beta).   
				\end{equation}
				Moreover, $\Delta^{k}(f)$ extends as a bounded operator on the Hilbert module $L^{2}(\mathcal G)$. % In particular, $\left \|\Delta^k (f) \right \| \leq \left \|M_{\ell}^k (f) \right \|_{I}$ for all $f\in C_{c}(\mathcal G)$.
			\end{lem}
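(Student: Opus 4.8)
The plan is to establish the formula \eqref{deltakformula} by induction on $k$ and then to deduce boundedness by dominating $\Delta^k(f)$ by a genuine convolution operator. The base case $k=1$ is precisely the computation displayed just before the lemma. For the inductive step I would assume \eqref{deltakformula} for $k-1$ and compute $\Delta^k(f)=[M_{\ell},\Delta^{k-1}(f)]$ directly on a vector $\xi\in C_c(\mathcal G)$. Since $M_{\ell}$ is pointwise multiplication by $\ell$, the term $M_{\ell}\bigl(\Delta^{k-1}(f)(\xi)\bigr)(\gamma)$ pulls out a factor $\ell(\gamma)$ in front of the sum, while $\Delta^{k-1}(f)\bigl(M_{\ell}\xi\bigr)(\gamma)$ replaces $\xi(\beta)$ by $\ell(\beta)\xi(\beta)$; subtracting the two produces an extra factor $\bigl(\ell(\gamma)-\ell(\beta)\bigr)$ inside the sum, turning the $(k-1)$-th power into the $k$-th power. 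This is the same diagonal cancellation that produced the $k=1$ formula, so the induction closes with no new difficulty.

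For boundedness the key elementary estimate is
\[
\bigl|\ell(\gamma)-\ell(\beta)\bigr|\le \ell\bigl(\gamma\beta^{-1}\bigr)\qquad\text{for every }\beta\in\mathcal G_{s(\gamma)}.
\]
I would obtain this by writing $\gamma=\bigl(\gamma\beta^{-1}\bigr)\beta$ and $\beta=\bigl(\beta\gamma^{-1}\bigr)\gamma$, applying the triangle inequality $(3)$ to each factorization, and using the symmetry $(2)$ together with $\ell\bigl((\gamma\beta^{-1})^{-1}\bigr)=\ell\bigl(\gamma\beta^{-1}\bigr)$. Next set $h(\alpha):=\ell(\alpha)^{k}\,|f(\alpha)|$; since $\ell$ is continuous and $f$ is compactly supported, $h\in C_c(\mathcal G)$ and $\|h\|_I<\infty$. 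Combining \eqref{deltakformula} with the inequality above gives the pointwise domination
\[
\bigl|\Delta^k(f)(\xi)(\gamma)\bigr|\le \sum_{\beta\in\mathcal G_{s(\gamma)}}\ell\bigl(\gamma\beta^{-1}\bigr)^{k}\,\bigl|f\bigl(\gamma\beta^{-1}\bigr)\bigr|\,|\xi(\beta)| = \bigl(h\ast|\xi|\bigr)(\gamma)
\]
for every $\gamma\in\mathcal G$. Because the Hilbert-module norm only sees the pointwise moduli, this yields $\|\Delta^k(f)(\xi)\|\le \|\lambda(h)(|\xi|)\|\le \|h\|_I\,\|\xi\|$, where I use the standard estimate $\|\lambda(h)\|_{\mathrm{adj}}\le\|h\|_I$ recalled above together with $\||\xi|\|=\|\xi\|$. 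Hence $\Delta^k(f)$ is bounded on the dense submodule $C_c(\mathcal G)$ and extends to an adjointable operator on $L^2(\mathcal G)$.

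I expect the only genuinely delicate point to be the domination step: one must justify that replacing the signed, complex kernel $\bigl(\ell(\gamma)-\ell(\beta)\bigr)^{k}f\bigl(\gamma\beta^{-1}\bigr)$ by the nonnegative kernel $h\bigl(\gamma\beta^{-1}\bigr)$ is legitimate in the module norm. This rests on two facts, both of which I would state explicitly: the absolute-value inequality $\bigl|\ell(\gamma)-\ell(\beta)\bigr|\le\ell(\gamma\beta^{-1})$, and the monotonicity of the $C_0(\mathcal G^{(0)})$-valued norm under pointwise domination by a nonnegative function (so that $\bigl|\Delta^k(f)(\xi)(\gamma)\bigr|\le \bigl(h\ast|\xi|\bigr)(\gamma)$ for all $\gamma$ forces $\|\Delta^k(f)(\xi)\|\le\|h\ast|\xi|\|$). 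The induction itself is routine bookkeeping once the $k=1$ cancellation is in hand.
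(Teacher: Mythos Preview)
Your proposal is correct and follows essentially the same approach as the paper: the induction for \eqref{deltakformula} is identical, and for boundedness the paper also uses the subadditivity estimate $|\ell(\gamma)-\ell(\beta)|\le \ell(\gamma\beta^{-1})$ to dominate $\Delta^k(f)(\xi)$ pointwise by $M_\ell^k(|f|)\ast|\xi|$ (your $h$ is precisely $M_\ell^k(|f|)$) and then invokes $\|\lambda(\cdot)\|_{\mathrm{adj}}\le\|\cdot\|_I$. The only cosmetic difference is that the paper writes out the chain of sup--sum inequalities explicitly rather than phrasing it as a monotonicity principle, and it defers adjointability to the next proposition rather than asserting it here.
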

			\begin{proof}
				We will prove the formula~\eqref{deltakformula} by induction on $k.$ The case $k=1$ has already been proved. Hence, we assume the result to be true for $k - 1.$ Then we have
				\Bea
				\Delta^k (f) (\xi) (\gamma) & = & \big [D, \Delta^{k-1} (f) \big ] (\xi) (\gamma) \\ & = & M_{\ell} \left (\Delta^{k - 1} (f) (\xi) \right ) (\gamma) - \Delta^{k -1} (f) \left (M_{\ell} (\xi) \right ) (\gamma) \\ & = & \ell (\gamma) \Delta^{k - 1} (f) (\xi) (\gamma) - \Delta^{k -1} (f) \left (M_{\ell} (\xi) \right ) (\gamma) \\ & = & \ell (\gamma) \sum\limits_{\beta \in \mathcal G_{s(\gamma)}} \left (\ell (\gamma) - \ell (\beta) \right )^{k -1} f \left (\gamma \beta^{-1} \right ) \xi (\beta) \\ &&  - \sum\limits_{\beta \in \mathcal G_{s(\gamma)}} \left (\ell (\gamma) - \ell (\beta) \right )^{k - 1} f \left (\gamma \beta^{-1} \right ) \ell (\beta) \xi (\beta) \\ & = & \sum\limits_{\beta \in \mathcal G_{s(\gamma)}} \left (\ell (\gamma) - \ell (\beta) \right )^k f \left (\gamma \beta^{-1} \right ) \xi (\beta)  
				\Eea
				
				where the penultimate equality follows from the induction hypothesis. Hence, the claim follows.
				
				Thus for $f, \xi \in C_c (\mathcal G),$ we have
				\Bea
					\hnorm{\Delta^k (f) (\xi)}^2  & = & \sup\limits_{x \in \mathcal G^{(0)}} \sum\limits_{\gamma \in \mathcal G_x} \bigl \lvert \Delta^k (f) (\xi) (\gamma) \bigr \rvert^2 \\ & = &  \sup\limits_{x \in \mathcal G^{(0)}} \sum\limits_{\gamma \in \mathcal G_x} \biggl \lvert \sum\limits_{\beta \in \mathcal G_x} \left (\ell (\gamma) - \ell (\beta) \right )^k f \left (\gamma \beta^{-1} \right ) \xi (\beta) \biggr \rvert^2 \\ &
					\leq & \sup\limits_{x \in \mathcal G^{(0)}} \sum\limits_{\gamma \in \mathcal G_x} \biggl ( \sum\limits_{\beta \in \mathcal G_x} \left \lvert \ell (\gamma) - \ell (\beta) \right \rvert^k \left \lvert f \left (\gamma \beta^{-1} \right ) \right \rvert \left \lvert \xi (\beta) \right \rvert \biggr )^2 \\ & \leq &  \sup\limits_{x \in \mathcal G^{(0)}} \sum\limits_{\gamma \in \mathcal G_x} \biggl ( \sum\limits_{\beta \in \mathcal G_x} \ell \left (\gamma \beta^{-1} \right )^k \left \lvert f \left (\gamma \beta^{-1} \right ) \right \rvert \left \lvert \xi (\beta) \right \rvert \biggr )^2 \quad {(\mathrm{by\ subadditivity\ of}\ \ell)} \\
					 & = & \sup\limits_{x \in \mathcal G^{(0)}} \sum\limits_{\gamma \in \mathcal G_x} \biggl ( \bigl (M_{\ell}^k \left (\lvert f \rvert \bigr ) \ast \lvert \xi \rvert \right ) (\gamma) \biggr )^2 \\ & = & \left \|M_{\ell}^{k} \left (\lvert f \rvert  \right ) \ast \lvert \xi \rvert \right \|^2 \\ & = & \left \|\lambda \left (M_{\ell}^k \left (\lvert f \rvert \right )  \right )  \left (\lvert \xi \rvert \right ) \right \|^2 \\ & 
					 \leq & \left \|\lambda \left (M_{\ell}^k \left (\lvert f \rvert \right )  \right ) \right \|_{\mathrm {adj}}^2 \|\xi\|^2  \\ & \leq &  \left \|M_{\ell}^k \left (\lvert f \rvert \right ) \right \|_{I}^2 \|\xi\|^2  \\ & = &  \left \|M_{\ell}^k (f) \right \|_{I}^2 \|\xi\|^2.
				\Eea
		 Therefore, we have the inequality \(\left \|\Delta^k (f) \right \| \leq \left \|M_{\ell}^k (f) \right \|_{I}\) for any function \(f\) in \(C_c (\mathcal G)\).
	 This allows us to extend $\Delta^k (f)$ as a bounded operator on $L^2 (\mathcal G)$ as follows:
				$$\Delta^k (f) (\xi) = \lim\limits_{n \to \infty} \Delta^k (f) (\xi_n)$$
				where $\{\xi_n \}_{n \geq 1}$ is any sequence in $C_c (\mathcal G)$ converging to $\xi$ in the Hilbert module norm $\|\cdot\|.$
			\end{proof}
			\begin{prop}\label{prop21}
				For $f \in C_c (\mathcal G),$ the operator $\Delta^k (f)$ is adjointable and 
				$$\Delta^k (f)^{\ast} = (-1)^k \Delta^k (f^{\ast}).$$ 
				%where $f^{\ast} (\gamma) = \overline {f \left (\gamma^{-1} \right )},$ for all $\gamma \in \mathcal G.$
			\end{prop}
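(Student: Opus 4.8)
The plan is to verify directly that the bounded operator $(-1)^k \Delta^k(f^\ast)$ serves as an adjoint for $\Delta^k(f)$. Since both operators are already known to extend boundedly to $L^2(\mathcal{G})$ by the preceding lemma, it suffices to check the defining identity $\langle\langle \Delta^k(f)\xi, \eta\rangle\rangle = \langle\langle \xi, (-1)^k\Delta^k(f^\ast)\eta\rangle\rangle$ on the dense $\ast$-subalgebra $C_c(\mathcal{G})$; it then extends to the whole module by continuity, which simultaneously yields adjointability and the stated formula.

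First I would expand the left-hand side pointwise. Fixing $x \in \mathcal{G}^{(0)}$ and using formula \eqref{deltakformula}, together with the observation that every $\gamma \in \mathcal{G}_x$ satisfies $s(\gamma) = x$ (so the inner summation index $\beta$ also ranges over $\mathcal{G}_x$), and using that $(\ell(\gamma)-\ell(\beta))^k$ is real, I obtain
\[
\langle\langle \Delta^k(f)\xi, \eta\rangle\rangle(x) = \sum_{\gamma, \beta \in \mathcal{G}_x} (\ell(\gamma)-\ell(\beta))^k\, \overline{f(\gamma\beta^{-1})}\; \overline{\xi(\beta)}\; \eta(\gamma).
\]
Because $\xi,\eta,f$ are compactly supported and $\mathcal{G}_x$ is discrete, only finitely many summands are nonzero, so there is no convergence issue and the double sum may be reindexed freely.

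Next I would expand the right-hand side. The two key algebraic inputs are the involution formula $f^\ast(\gamma\beta^{-1}) = \overline{f(\beta\gamma^{-1})}$ and the relabelling of the summation indices $\gamma \leftrightarrow \beta$. After the swap, the factor $(\ell(\beta)-\ell(\gamma))^k = (-1)^k(\ell(\gamma)-\ell(\beta))^k$ contributes a sign $(-1)^k$ that cancels the explicit $(-1)^k$ in the candidate adjoint, and the resulting double sum coincides term by term with the expansion of the left-hand side. This gives the pointwise equality at every $x$, hence equality of the $C_0(\mathcal{G}^{(0)})$-valued inner products.

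I do not expect a genuine obstacle: the whole argument reduces to a bookkeeping computation with finite sums, and the only points demanding care are (i) confirming that $s(\gamma)=x$ forces both summation indices into the same source fibre $\mathcal{G}_x$, which is what makes the index swap legitimate, and (ii) noting that the reality of the length differences lets their $k$-th powers pass through the complex conjugation unchanged. The role of the earlier lemma is essential but indirect: the boundedness of $\Delta^k(f^\ast)$ is what guarantees that $(-1)^k\Delta^k(f^\ast)$ is a bona fide adjointable-operator partner on the full Hilbert module rather than merely a formal expression defined on $C_c(\mathcal{G})$.
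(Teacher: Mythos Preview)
Your proposal is correct. Both you and the paper verify the adjoint identity on the dense subspace $C_c(\mathcal G)$ by expanding the $C_0(\mathcal G^{(0)})$-valued inner product as a double sum over a source fibre, invoking the involution formula $f^{\ast}(\gamma\beta^{-1})=\overline{f(\beta\gamma^{-1})}$, and interchanging the summation indices. The only difference is in packaging: the paper treats the case $k=1$ by splitting $\Delta(f)=M_\ell\lambda(f)-\lambda(f)M_\ell$ into its two constituent terms, computes each inner product separately, and then leaves the general $k$ to a routine induction; you instead feed the closed formula \eqref{deltakformula} for $\Delta^k(f)$ directly into the inner product and handle every $k$ at once, the sign $(-1)^k$ emerging from $(\ell(\beta)-\ell(\gamma))^k=(-1)^k(\ell(\gamma)-\ell(\beta))^k$. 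Your route is marginally more economical since it exploits the lemma just proved rather than reverting to the commutator definition, while the paper's decomposition makes the underlying mechanism (that $M_\ell$ is ``self-adjoint'' and $\lambda(f)^\ast=\lambda(f^\ast)$) slightly more transparent.
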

			
			\begin{proof}
				Since $C_c (\mathcal G)$ is dense in $L^2 (\mathcal G)$ and $\Delta^k (f)$ is bounded, in order to show the desired equality it is enough to show that for any $\xi, \eta \in C_c (\mathcal G),$ we have 
				
				$$\bigl \langle \bigl \langle \Delta^k (f) (\xi), \eta \bigr \rangle \bigr \rangle = (-1)^k \bigl \langle \bigl \langle \xi, \Delta^k (f^{\ast}) (\eta) \bigr \rangle \bigr \rangle.$$
				
				We prove the above equality for the case $k=1$. For the general case a regular induction argument will suffice and we leave the induction argument to the reader. To that end, for $f, \xi, \eta \in C_c (\mathcal G)$ and $x \in \mathcal G^{(0)}$ we have
				\Bea
				\left \langle \left \langle M_{\ell} (\lambda (f) (\xi)), \eta \right \rangle \right \rangle (x)
				= \sum\limits_{\gamma \in \mathcal G_x} \ell (\gamma) \overline {\lambda (f) (\xi) (\gamma)}\ \eta (\gamma)  & = &  \sum\limits_{\gamma \in \mathcal G_x} \ell (\gamma) \sum\limits_{\beta \in \mathcal G_x} \overline {f \left (\gamma \beta^{-1} \right )}\ \overline {\xi (\beta)}\ \eta (\gamma) \\  & = &  \sum\limits_{\gamma \in \mathcal G_x} \sum\limits_{\beta \in G_x} \ell (\gamma) \overline {f \left (\gamma \beta^{-1} \right )}\ \overline {\xi (\beta)}\ \eta (\gamma).
				\Eea
				Using Fubini's theorem the last quantity can be written as
				\Bea
				\sum\limits_{\beta \in \mathcal G_x} \sum\limits_{\gamma \in \mathcal G_x} \ell (\gamma) \overline {f \left (\gamma \beta^{-1} \right )}\ \overline {\xi (\beta)}\ \eta (\gamma)  & = &  \sum\limits_{\beta \in \mathcal G_x} \overline {\xi (\beta)}\ \sum\limits_{\gamma \in \mathcal G_x} \ell (\gamma) f^{\ast} \left (\beta \gamma^{-1} \right )\ \eta (\gamma) \\  & = &  \sum\limits_{\beta \in \mathcal G_x} \overline {\xi (\beta)}\ \sum\limits_{\gamma \in \mathcal G_x} f^{\ast} \left (\beta \gamma^{-1} \right )\ M_{\ell} (\eta) (\gamma) \\ & = &  \sum\limits_{\beta \in \mathcal G_x} \overline {\xi (\beta)} \left (f^{\ast} \ast M_{\ell} (\eta) \right ) (\beta) \\ & = & \sum\limits_{\beta \in \mathcal G_x} \overline {\xi (\beta)} \lambda (f^{\ast}) \left (M_{\ell} (\eta) \right ) (\beta) \\ & = & \left \langle \left \langle \xi, \lambda (f^{\ast}) \left (M_{\ell} (\eta) \right ) \right \rangle \right \rangle (x).  
				\Eea
				Again for \(f, \xi, \eta \in C_c (\mathcal G)\) and \(x\in \mathcal{G}^{(0)}\), we have
				\Bea
					\left \langle \left \langle \lambda (f) \left (M_{\ell} (\xi) \right ), \eta \right \rangle \right \rangle (x) =\sum\limits_{\gamma \in \mathcal G_x} \overline {\lambda (f) \left (M_{\ell} (\xi) \right ) (\gamma)}\ \eta (\gamma) & = & \sum\limits_{\gamma \in \mathcal G_x} \overline {\left (f \ast M_{\ell} (\xi) \right ) (\gamma)}\ \eta (\gamma) \\ & = &  \sum\limits_{\gamma \in \mathcal G_x} \sum\limits_{\beta \in \mathcal G_x} \overline {f \left (\gamma \beta^{-1} \right )}\ \overline {M_{\ell} (\xi) (\beta)}\ \eta (\gamma).
				\Eea
				Again, using Fubini's theorem the last term becomes
				\Bea
				\sum\limits_{\beta \in \mathcal G_x} \sum\limits_{\gamma \in \mathcal G_x} \overline {f \left (\gamma \beta^{-1} \right )}\ \ell (\beta)\ \overline {\xi (\beta)}\ \eta (\gamma)  & = &  \sum\limits_{\beta \in \mathcal G_x} \ell (\beta) \overline {\xi (\beta)} \sum\limits_{\gamma \in \mathcal G_x} \overline {f \left (\gamma \beta^{-1} \right )}\ \eta (\gamma) \\ & = &  \sum\limits_{\beta \in \mathcal G_x} \ell (\beta) \overline {\xi (\beta)} \sum\limits_{\gamma \in \mathcal G_x} f^{\ast} \left (\beta \gamma^{-1} \right )\ \eta (\gamma) \\ & = &  \sum\limits_{\beta \in \mathcal G_x} \ell (\beta) \overline {\xi (\beta)} \left (f^{\ast} \ast \eta \right ) (\beta) \\ & = &  \sum\limits_{\beta \in \mathcal G_x} \overline {\xi (\beta)} M_{\ell} \left (f^{\ast} \ast \eta \right ) (\beta) \\ & = & \left \langle \left \langle \xi, M_{\ell} \left (\lambda (f^{\ast}) (\eta) \right ) \right \rangle \right \rangle (x).
				\Eea

Therefore, we have 
\Bea
	\left \langle \left \langle \Delta (f) (\xi), \eta \right \rangle \right \rangle & = &  \left \langle \left \langle M_{\ell} (\lambda (f) (\xi)), \eta \right \rangle \right \rangle - \left \langle \left \langle \lambda (f) (M_{\ell} (\xi)), \eta \right \rangle \right \rangle \\ & = &  \left \langle \left \langle \xi, \lambda (f^{\ast}) \left (M_{\ell} (\eta) \right ) \right \rangle \right \rangle - \left \langle \left \langle \xi, M_{\ell} \left (\lambda (f^{\ast}) (\eta) \right ) \right \rangle \right \rangle \\ & = &  - \left \langle \left \langle \xi, \Delta (f^{\ast}) (\eta) \right \rangle \right \rangle.  
\Eea
				
%				\Bea 
%				\left \langle \left \langle \Delta (f) (\xi), \eta \right \rangle \right \rangle & = & \left \langle \left \langle M_{\ell} (\lambda (f) (\xi)), \eta \right \rangle \right \rangle - \left \langle \left \langle \lambda (f) (M_{\ell} (\xi)), \eta \right \rangle \right \rangle \\ & = & \left \langle \left \langle \xi, \lambda (f^{\ast}) \left (M_{\ell} (\eta) \right ) \right \rangle \right \rangle - \left \langle \left \langle \xi, M_{\ell} \left (\lambda (f^{\ast}) (\eta) \right ) \right \rangle \right \rangle \\ & = & - \left \langle \left \langle \xi, \Delta (f^{\ast}) (\eta) \right \rangle \right \rangle.  
%				\Eea
		This shows that $\Delta (f)^{\ast} = - \Delta (f^{\ast})$ for any $f \in C_c (\mathcal G),$  proving the equality for $k = 1.$ 
			\end{proof}
			
			%\begin{lem}\label{lem1}
			%Let $\mathcal G$ be an étale groupoid. Then for $f \in C_c (\mathcal G),$ $\|f\|_{L^2 (\mathcal G)} = 0$ if and only if $f \equiv 0,$ that is, the embedding $C_c (\mathcal G) \hookrightarrow L^2 (\mathcal G)$ is faithful.
			%\end{lem}
			
			%\begin{proof}
			%If $f \equiv 0$ then clearly $\|f\|_{L^2 (\mathcal G)} = 0.$ Conversely, if $\|f\|_{L^2 (\mathcal G)} = 0$ then we have $$\sup\limits_{x \in \mathcal G^{(0)}} \sum\limits_{\gamma \in \mathcal G_x} \left \lvert f (\gamma) \right \rvert^2 = 0.$$ This, in turn, implies that $f(\gamma) = 0$ for all $\gamma \in \mathcal G.$ 
			%\end{proof}
			
			\begin{lem}\label{lem2}
				Let $\mathcal G$ be an étale groupoid. Then for any $f, \xi \in C_c (\mathcal G)$ and $k \geq 1,$ $\Delta^k (f) (\xi) \in C_c (\mathcal G).$    
			\end{lem}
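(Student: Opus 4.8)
The plan is to avoid checking continuity and compact support directly from the defining formula \eqref{deltakformula}, and instead rewrite $\Delta^k(f)(\xi)$ as a finite linear combination of functions that manifestly belong to $C_c(\mathcal G)$. I would rely on two elementary closure properties. The first is that $M_\ell$ maps $C_c(\mathcal G)$ into itself: if $g \in C_c(\mathcal G)$ then $M_\ell(g) = \ell \cdot g$ is continuous because $\ell$ is continuous, and $\operatorname{supp}(M_\ell(g)) \subseteq \operatorname{supp}(g)$ is compact, so iterating gives $M_\ell^m(g) \in C_c(\mathcal G)$ for every $m$. The second is that $C_c(\mathcal G)$ is closed under the convolution product $\ast$, which is precisely the already-recorded fact that $C_c(\mathcal G)$ is a $\ast$-algebra for the étale groupoid $\mathcal G$.

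The main step is to expand the factor $(\ell(\gamma) - \ell(\beta))^k$ in \eqref{deltakformula} by the binomial theorem, writing
\[
(\ell(\gamma) - \ell(\beta))^k = \sum_{j=0}^k \binom{k}{j}(-1)^{k-j}\,\ell(\gamma)^j\,\ell(\beta)^{k-j},
\]
and substituting this into the formula. Since $\operatorname{supp}(\xi)$ is compact and $\mathcal G$ is étale, for each fixed $\gamma$ the inner sum over $\beta \in \mathcal G_{s(\gamma)}$ has only finitely many nonzero terms, so all the rearrangements are legitimate finite sums. Pulling $\ell(\gamma)^j$ out of the $\beta$-sum, recognizing $\ell(\beta)^{k-j}\xi(\beta) = M_\ell^{k-j}(\xi)(\beta)$ so that the remaining $\beta$-sum is the convolution $\bigl(f \ast M_\ell^{k-j}(\xi)\bigr)(\gamma)$, and noting that multiplication by $\ell(\gamma)^j$ is the operator $M_\ell^j$, I obtain the identity
\[
\Delta^k(f)(\xi) = \sum_{j=0}^k \binom{k}{j}(-1)^{k-j}\, M_\ell^j\bigl(f \ast M_\ell^{k-j}(\xi)\bigr).
\]
(As a sanity check, for $k=1$ this reduces to $M_\ell(f \ast \xi) - f \ast M_\ell(\xi)$, matching the expression for $\Delta(f)(\xi)$ derived before the lemma.)

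It then remains only to read off membership term by term. For each $j$, the function $M_\ell^{k-j}(\xi)$ lies in $C_c(\mathcal G)$ by the first closure property, the convolution $f \ast M_\ell^{k-j}(\xi)$ lies in $C_c(\mathcal G)$ by the second, and applying $M_\ell^j$ returns an element of $C_c(\mathcal G)$ again by the first. A finite linear combination of elements of $C_c(\mathcal G)$ is again in $C_c(\mathcal G)$, which gives $\Delta^k(f)(\xi) \in C_c(\mathcal G)$. I do not expect a genuine obstacle here; the only points requiring care are the justification that the inner sums are finite—so that the binomial rearrangement is valid—which follows from the compact support of $\xi$ together with the étale hypothesis, and the appeal to the convolution-closure of $C_c(\mathcal G)$, which is standard for étale groupoids.
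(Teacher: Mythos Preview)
Your argument is correct. The paper's proof uses the same two closure facts you isolate---that $M_\ell$ and convolution preserve $C_c(\mathcal G)$---but organizes them via a short induction on $k$ based on the recursive identity $\Delta^k(f)(\xi) = M_\ell\bigl(\Delta^{k-1}(f)(\xi)\bigr) - \Delta^{k-1}(f)\bigl(M_\ell(\xi)\bigr)$, rather than by expanding the closed formula~\eqref{deltakformula}. Your binomial expansion is precisely what one obtains by unwinding that induction, so the two arguments are equivalent in content; the paper's version is marginally shorter since it does not need to invoke~\eqref{deltakformula} or justify the interchange of finite sums, while yours has the minor bonus of producing the explicit identity $\Delta^k(f)(\xi) = \sum_{j=0}^k \binom{k}{j}(-1)^{k-j} M_\ell^j\bigl(f \ast M_\ell^{k-j}(\xi)\bigr)$.
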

			
			\begin{proof}
				Clearly, the result holds for $k = 1.$ We assume that the result holds for $k - 1$ for $k > 1.$ Since $M_{\ell} (\eta) \in C_c (\mathcal G)$ for all $\eta \in C_c (\mathcal G),$ by induction hypothesis it turns out that both $M_{\ell} \left (\Delta^{k - 1} (\xi) \right )$ and $\Delta^{k - 1} \left (M_{\ell} (\xi) \right )$ are in $C_c (\mathcal G)$ and consequently, $\Delta^k (f) (\xi) \in C_c (\mathcal G).$
			\end{proof}
			
			\begin{prop}\label{prop22}
				Let $\mathcal G$ be an étale groupoid such that the unit space $\mathcal G^{(0)}$ is compact. Let $f \in C_c (\mathcal G),$ $\ell$ be a length function on $\mathcal G$ and let $\Delta^k (f)$ be the corresponding adjointable operator on $L^2 (\mathcal G).$ Then $\Delta^k (f) = 0$ if and only if $f (\gamma) = 0$ for all $\gamma \in \mathcal G \setminus \mathcal G^{(0)},$ that is, $f \in C \left (\mathcal G^{(0)} \right ).$    
			\end{prop}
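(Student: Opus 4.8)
The plan is to read off both implications directly from the closed formula~\eqref{deltakformula} for $\Delta^k(f)(\xi)(\gamma)$, the decisive point being that compactness of $\mathcal{G}^{(0)}$ makes the multiplicative identity $\mathcal E$ of~\eqref{eq:0} an element of $C_c(\mathcal G)$, hence a legitimate test vector in $L^2(\mathcal G)$.

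For the easy implication I would assume $f \in C(\mathcal{G}^{(0)})$, i.e. $f$ is supported on $\mathcal{G}^{(0)}$. In the sum $\sum_{\beta \in \mathcal{G}_{s(\gamma)}} (\ell(\gamma)-\ell(\beta))^k f(\gamma\beta^{-1})\xi(\beta)$ a summand can be nonzero only when $\gamma\beta^{-1} \in \mathcal{G}^{(0)}$. I would note the elementary groupoid identity that, for $\beta \in \mathcal{G}_{s(\gamma)}$, the product $\gamma\beta^{-1}$ is a unit precisely when $\beta = \gamma$; for that single surviving term the scalar factor is $(\ell(\gamma)-\ell(\gamma))^k = 0$ since $k \geq 1$. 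Hence $\Delta^k(f)(\xi)(\gamma)=0$ for every $\xi$ and every $\gamma$, so $\Delta^k(f)=0$.

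For the converse, the key idea is to evaluate the operator on $\mathcal E$. Here one uses that $\mathcal E(\beta)\neq 0$ forces $\beta \in \mathcal{G}^{(0)} \cap \mathcal{G}_{s(\gamma)} = \{s(\gamma)\}$, since $s(\gamma)$ is the only unit in the source fibre $\mathcal{G}_{s(\gamma)}$. Thus the sum in~\eqref{deltakformula} collapses to the single term $\beta = s(\gamma)$, and using $\ell(s(\gamma))=0$ together with $\gamma\, s(\gamma)^{-1}=\gamma$ I obtain
\[
\Delta^k(f)(\mathcal E)(\gamma) = \ell(\gamma)^k\, f(\gamma).
\]
Consequently, if $\Delta^k(f)=0$ then in particular $\Delta^k(f)(\mathcal E)=0$, so $\ell(\gamma)^k f(\gamma)=0$ for all $\gamma \in \mathcal G$. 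For $\gamma \in \mathcal{G}\setminus\mathcal{G}^{(0)}$ the defining property of a length function gives $\ell(\gamma)>0$, whence $f(\gamma)=0$; that is, $f \in C(\mathcal{G}^{(0)})$.

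I do not expect a serious obstacle here once the correct test vector is identified; the only points requiring care are the two elementary groupoid facts $\{\beta \in \mathcal{G}_{s(\gamma)} : \gamma\beta^{-1}\in\mathcal{G}^{(0)}\}=\{\gamma\}$ and $\mathcal{G}^{(0)}\cap\mathcal{G}_{s(\gamma)}=\{s(\gamma)\}$, and the observation that compactness of the unit space is exactly what secures $\mathcal E \in C_c(\mathcal G)$, so that the evaluation $\Delta^k(f)(\mathcal E)$ is permitted. Without this hypothesis the identity would be unavailable and one would instead have to localise on bisections, so it is worth flagging explicitly where compactness enters.
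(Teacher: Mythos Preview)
Your proof is correct and follows essentially the same route as the paper: both directions are read off from the closed formula~\eqref{deltakformula}, the converse by testing against the identity element $\mathcal E$ (available in $C_c(\mathcal G)$ precisely because $\mathcal G^{(0)}$ is compact) to obtain $\Delta^k(f)(\mathcal E)(\gamma)=\ell(\gamma)^k f(\gamma)$, and the forward direction by noting that the only surviving summand has $\beta=\gamma$ and hence vanishing coefficient. The paper additionally invokes Lemma~\ref{lem2} and the faithful embedding $C_c(\mathcal G)\hookrightarrow L^2(\mathcal G)$ to pass from $\Delta^k(f)(\mathcal E)=0$ in the Hilbert module to pointwise vanishing, a small technical point you leave implicit.
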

			
			\begin{proof}
				Let $\Delta^k (f) = 0$ for some $f \in C_c (\mathcal G).$ Then $\Delta^k (f) (\mathcal E) = 0$ in $L^2 (\mathcal G),$ where $\mathcal E$ is the unit given by $$\mathcal E (\gamma) = \begin{cases} 1, \quad \text {if}\ \gamma \in \mathcal G^{(0)}, \\ 0, \quad \text {otherwise}. \end{cases}$$ 
				Since $\mathcal G^{(0)}$ is compact and the groupoid \(\mathcal{G}\) is {\'e}tale, it follows that $\mathcal E \in C_c (\mathcal G)$. Using Lemma~\ref{lem2} we conclude that $\Delta^k (f) (\mathcal E) \equiv 0,$ as $C_c (\mathcal G)$ is faithfully embedded in $L^2 (\mathcal G).$ Now for any $\gamma \in \mathcal G$ 
				\[
				\Delta^k (f) (\mathcal E) (\gamma)  =  \sum\limits_{\beta \in \mathcal G_{s(\gamma)}} \left (\ell (\gamma) - \ell (\beta) \right )^k f \left (\gamma \beta^{-1} \right ) \mathcal E (\beta)  =  \ell (\gamma)^k f (\gamma).
				\]
			%	\Bea
			%	\Delta^k (f) (\mathcal E) (\gamma) & = & \sum\limits_{\beta \in \mathcal G_{s(\gamma)}} \left (\ell (\gamma) - \ell (\beta) \right )^k f \left (\gamma \beta^{-1} \right ) \mathcal E (\beta) \\ & = & \ell (\gamma)^k f (\gamma) 
			%	\Eea
			Therefore, $\Delta^k (f) (\mathcal E) \equiv 0$ if and only if $\ell (\gamma)^k f(\gamma) = 0$ for all $\gamma \in \mathcal G.$ Since $\ell (\gamma) = 0$ if and only if $\gamma \in \mathcal G^{(0)},$ it follows that $f(\gamma) = 0$ for all $\gamma \in \mathcal G \setminus \mathcal G^{(0)},$ that is, $f \in C \left (\mathcal G^{(0)} \right ).$

				Conversely, assume that $f \in C \left (\mathcal G^{(0)} \right ).$ Then for any $\xi \in C_c (\mathcal G)$ and $\gamma \in \mathcal G$, we have
				\[
					\Delta^k (f) (\xi) (\gamma)  =  \sum\limits_{\beta \in \mathcal G_{s(\gamma)}} \left (\ell (\gamma) - \ell (\beta) \right )^k f \left (\gamma \beta^{-1} \right ) \xi (\beta)  =  \left (\ell (\gamma) - \ell (\gamma) \right )^k f \left (\gamma \gamma^{-1} \right ) \xi (\gamma) =  0.
				\] 
				%	\Bea
			%	\Delta^k (f) (\xi) (\gamma) & = & \sum\limits_{\beta \in \mathcal G_{s(\gamma)}} \left (\ell (\gamma) - \ell (\beta) \right )^k f \left (\gamma \beta^{-1} \right ) \xi (\beta) \\ & = & \left (\ell (\gamma) - \ell (\gamma) \right )^k f \left (\gamma \gamma^{-1} \right ) \xi (\gamma) \\ & = & 0
			%	\Eea
			Since $C_c (\mathcal G)$ is faithfully embedded inside $L^2 (\mathcal G),$ it follows that $\Delta^k (f) (\xi) = 0$ in $L^2 (\mathcal G),$ for any $\xi \in C_c (\mathcal G)$ and consequently, $\Delta^k (f) = 0.$ This completes the proof.
			\end{proof}
			
			Now we will discuss compact quantum metric spaces as defined in~\cite{Connes-1989-Compact-metric-sp-Fred-mod-Hyperfinteness,Article}. We wish to define a Lip-norm using the sequence of adjointable operators $\Delta^{k}$ on $C^{\ast}_{r}(\mathcal G)$ as done in~\cite{Long-Wu-2017-Twisted-group-C-alg-as-CQMS}.
			 However, as observed in the previous proposition, the kernel of \(\Delta^{k}\) is quite large for all \(k \in \mathbb{N}\). Given this situation, we will define the following (see also~\cite[Section 3]{Long-Wu-2017-Twisted-group-C-alg-as-CQMS}).
			
			\begin{defn}\label{def6} We call a seminorm $L$ on a unital $C^{\ast}$-algebra $A$ a Lipschitz {\it quasi}-seminorm if the following conditions are satisfied:
				
				$(1)$ $L(a^{\ast}) = L(a)$ for all $a \in A$;
				
				$(2)$ the set $\mathcal K : = \{a \in A\ :\ L(a) = 0 \}$ is a closed subspace of $A$ such that $1_{A}\in\mathcal{K}$;
				
				$(3)$ the set $\mathcal A : = \{a \in A\ :\ L(a) < +\infty \}$ is a dense subset of $A.$
				
			\end{defn}
			Note that a quasi-seminorm is a seminorm in the sense of~\cite[Section 2]{Rieffel-1999-Metrics-on-state-spaces} if ${\rm dim}(\mathcal K)=1$. Given a Lipschitz quasi-seminorm $L$ on a $C^{\ast}$-algebra as above one can define a metric $\rho$ on the state space $\mathcal S(A)$ by the following formula (see~\cite[Equation 1.1]{Rieffel-1998-Metrics-on-state-from-action-of-cmpt-gp}):
			\begin{displaymath}
				\rho(\mu,\nu)={\rm sup}\{|\mu(a)-\nu(a)|:a\in A, \ L(a)\leq 1\}.
			\end{displaymath}
			However, if ${\rm dim}(\mathcal K)\geq 2$, the metric $\rho$ always attains the value $+\infty$ and therefore it cannot metrize any connected topology (see \cite{https://doi.org/10.1112/blms.12930}) and in particular the weak$^\ast$-topology on the state space $\mathcal S(A)$. 
			Let $A$ be a $C^{\ast}$-algebra and $L$ be a quasi-seminorm on $A.$ Let $\eta$ be a state on $\mathcal K : = \{a \in A\ :\ L(a) = 0 \}.$ Let $A^{\ast}$ be the Banach space dual of $A,$ and set 
			$$S_{\eta} : = \left \{\mu \in A^{\ast}\ :\ \mu \rvert_{\mathcal K} = \eta,\ \text {and}\ \|\mu\| = 1 \right \}.$$
			
			It is clear that \( S_{\eta} \) is a weak\(^*\)-closed subset of the closed unit ball of \( A^{\ast} \) and, therefore, by Banach-Alaoglu's theorem, it is weak\(^*\)-compact. The following lemma is well-known. However, we could not find an explicit reference for it. So we briefly mention it here.
			\begin{lem}
				Let ${\rm dim}(\mathcal K)\geq 2$; $\phi\in S_\eta, \psi\in S_{\eta^{\prime}}$ for $\eta\neq\eta^{\prime}$. Then $\rho(\phi,\psi)=+\infty$. 
			\end{lem}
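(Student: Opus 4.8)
The plan is to exploit the fact that the entire kernel $\mathcal{K}$ is invisible to the seminorm $L$, so that every scalar multiple of a kernel element is an admissible competitor in the supremum defining $\rho$. First I would produce a witness from the hypothesis $\eta \neq \eta'$: since $\eta$ and $\eta'$ are distinct functionals on $\mathcal{K}$, there exists $a_0 \in \mathcal{K}$ with $\eta(a_0) \neq \eta'(a_0)$; put $c := |\eta(a_0) - \eta'(a_0)| > 0$. Because $\phi \in S_\eta$ and $\psi \in S_{\eta'}$ restrict on $\mathcal{K}$ to $\eta$ and $\eta'$ respectively, we have $\phi(a_0) = \eta(a_0)$ and $\psi(a_0) = \eta'(a_0)$, and hence $|\phi(a_0) - \psi(a_0)| = c > 0$.

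Next I would scale this witness. Since $a_0 \in \mathcal{K}$ means $L(a_0) = 0$, the homogeneity of the seminorm gives $L(t a_0) = t\,L(a_0) = 0 \leq 1$ for every $t > 0$, so each $t a_0$ lies in the admissible set for $\rho(\phi,\psi)$. Linearity of $\phi$ and $\psi$ then yields
$$
|\phi(t a_0) - \psi(t a_0)| = t\,|\phi(a_0) - \psi(a_0)| = t\,c .
$$
Letting $t \to +\infty$ and using $c > 0$ forces
$$
\rho(\phi,\psi) = \sup\{\,|\phi(a) - \psi(a)| : a \in A,\ L(a) \leq 1\,\} = +\infty ,
$$
which is the assertion.

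I would finally remark on the role of the hypothesis $\dim(\mathcal{K}) \geq 2$: it serves only to make the premise $\eta \neq \eta'$ non-vacuous. Indeed, $\mathcal{K}$ is a self-adjoint subspace of $A$ containing $1_A$ (self-adjointness because $L(a^{\ast}) = L(a)$), that is, an operator system, and when $\dim(\mathcal{K}) \geq 2$ it carries at least two distinct states; if instead $\dim(\mathcal{K}) = 1$ then $\mathcal{K} = \mathbb{C}\,1_A$ and the unital positive functional on it is unique, so no pair $\eta \neq \eta'$ can occur. There is no genuine obstacle in the argument; the only point to keep straight is that scalar multiples of kernel elements remain in the kernel and thus incur no cost in $L$, which is precisely the mechanism producing the infinite distance.
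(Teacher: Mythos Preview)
Your proof is correct and follows essentially the same approach as the paper: pick $a_0\in\mathcal K$ separating $\eta$ and $\eta'$, then use that all scalar multiples of $a_0$ stay in $\mathcal K$ (hence satisfy $L\leq 1$) to push the supremum to $+\infty$. The paper normalizes so that $|\eta(a)-\eta'(a)|=1$ and then scales by integers $N$, whereas you keep $c>0$ and scale by real $t$; this is a cosmetic difference only.
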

			\begin{proof}
				As $\eta\neq\eta^{\prime}$, there is some $a\in\mathcal K$ such that $\eta(a)\neq\eta^{\prime}(a)$. As $\mathcal K$ is a subspace, we can assume without loss of generality that $|\eta(a)-\eta^{\prime}(a)|=1$. Then as $\phi\in S_{\eta}, \psi\in S_{\eta^{\prime}}$, we have 
				\begin{displaymath}
					|\phi(a)-\psi(a)|=|\eta(a)-\eta^{\prime}(a)|=1.
				\end{displaymath}
				We fix an arbitrary $N\in\mathbb{N}$. Then $Na\in\mathcal K$ and therefore $\rho(\phi,\psi)\geq N$. As $N$ is arbitrary, this proves the lemma.
			\end{proof}
			%Let $\{\mu_n\}_{n \geq 1}$ be a sequence in $S_{\eta}$ such that $\mu_n \to \mu$ in the weak$^{\ast}$-topology, for some $\mu$ in the closed unit ball of $A^{\ast}.$ Then for all $x \in \mathcal K,$ we have 
			%$$\mu (x) = \lim\limits_{n \to \infty} \mu_n (x) = \eta (x),$$
			%where the last equality follows from the fact that $\mu_n \in S_{\eta}$ for all $n \geq 1.$ Also,
			
			%$$\|\mu\| \geq \sup\limits_{\substack {x \in \mathcal K \\ \|x\| = 1}} \left \lvert \mu (x) \right \rvert \stackrel{\mu \rvert_{\mathcal K} = \eta}{=} \sup\limits_{\substack {x \in \mathcal K \\ \|x\| = 1}} \left \lvert \eta (x) \right \rvert = \|\eta\| = 1.$$
			
			%To get the reverse inequality note that $\|\mu_n\| = 1$ for all $n \geq 1,$ and hence
			
			%$$\left \lvert \mu (x) \right \rvert = \lim\limits_{n \to \infty} \left \lvert \mu_n (x) \right \rvert \leq \|x\|.$$ 
			
			%This shows that $\mu \in S_{\eta}.$ We will define a metric $\rho$ on $S_{\eta}$ as follows $:$
			The above lemma indicates that for ${\rm dim}(\mathcal K)\geq 2$ if we want a genuine metric induced from a quasi-seminorm then we have to look at the `restriction' of the metric $\rho$ on the weak$^\ast$-compact spaces $S_{\eta}$. We denote the restriction by $\rho_{L}$.
			We will refer to the topology on $S_{\eta}$ defined by $\rho_L$ as the ``$\rho_L$-topology,'' or the ``metric-topology'' when $\rho_L$ is understood.
			
			\begin{defn}\label{def10}
				We call a Lipschitz quasi-seminorm $L$ a quasi-Lip-norm on a unital $C^{\ast}$-algebra $A$ if the metric topology on $S_{\eta}$ induced from $\rho_L$ coincides with the weak$^{\ast}$-topology for all states $\eta$ on $\mathcal K.$ If there is a quasi-Lip-norm $L$ on $A,$ we say that that the pair $(A, L)$ is a quasi-compact quantum metric space. 
			\end{defn}
			Now we come back to étale groupoids. From now on we work with étale groupoids such that the unit space $\mathcal G^{(0)}$ is compact. Let $L$ be a Lipschitz quasi-seminorm on $C^{\ast}_{r}(\mathcal G)$ such that $C(\mathcal G^{(0)})\subseteq {\rm ker}(L)$ where $\mathcal G$ is an étale groupoid and $\mathcal G^{(0)}$ is compact. First we prove a sufficient condition so that $L$ becomes a quasi-Lip-norm according to the Definition~\ref{def10}. The following lemma is an analogue of~\cite[Theorem 1.8]{Rieffel-1998-Metrics-on-state-from-action-of-cmpt-gp}.
			\begin{lem}\label{lem20}
				Let $\mathcal G$ be an étale groupoid such that the unit space $\mathcal G^{(0)}$ is compact. Let $\eta$ be a state on $C(\mathcal G^{(0)}).$ Let $L$ be a Lipschitz quasi-seminorm on $C_r^{\ast} (\mathcal G)$ such that $C(\mathcal G^{(0)})\subseteq {\mathrm {ker}}(L)$. If the set $\mathcal L_1' : = \left \{f \in C_c (\mathcal G)\ :\ L(f) \leq 1,\ f\rvert_{\mathcal G^{(0)}} \equiv 0 \right \}$ is totally bounded in $C_r^{\ast} (\mathcal G)$ with respect to the reduced norm $\|\cdot\|_{\mathrm {red}},$ then the weak*-topology on $S_{\eta}$ coincides with the $\rho_{L}$-topology for any state $\eta$ on $\mathrm {ker} (L)$. This means that the pair \((C_r^{\ast}(\mathcal{G}), L)\) forms a quasi-compact quantum metric space.    
			\end{lem}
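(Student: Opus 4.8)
The plan is to exploit the weak\(^*\)-compactness of \(S_\eta\) (established above via Banach--Alaoglu) together with the fact that every metric topology is Hausdorff. If I can show that \(\rho_L\) is a genuine (finite, point-separating) metric on each fibre and that the identity map \(\iota\colon (S_\eta,\text{weak}^*)\to(S_\eta,\rho_L)\) is continuous, then \(\iota\) is a continuous bijection from a compact space onto a Hausdorff space, hence a homeomorphism, and the two topologies coincide. Thus the whole argument splits into three tasks: (i) rewriting \(\rho_L\) on a fixed fibre in terms of the concrete set \(\mathcal L_1'\); (ii) verifying that \(\rho_L\) is a metric; and (iii) deducing continuity of \(\iota\) from the total boundedness hypothesis.

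The first and most delicate task is the reformulation. Let \(E\colon C_r^{\ast}(\mathcal G)\to C(\mathcal G^{(0)})\) be the canonical (restriction) conditional expectation, so that \(E(f)=f|_{\mathcal G^{(0)}}\) for \(f\in C_c(\mathcal G)\). Since \(C(\mathcal G^{(0)})\subseteq\mathcal K=\mathrm{ker}(L)\), any two \(\mu,\nu\in S_\eta\) agree on the range of \(E\); hence for every \(a\) with \(L(a)\le 1\) one has \(\mu(a)-\nu(a)=(\mu-\nu)(a-E(a))\), where \(E(a-E(a))=0\) and \(L(a-E(a))=L(a)\le 1\) because \(E(a)\in\mathcal K\). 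This collapses the supremum defining \(\rho_L|_{S_\eta}\) onto elements killed by \(E\). Combining this with the density of \(C_c(\mathcal G)\) in \(C_r^{\ast}(\mathcal G)\) and an approximation argument inside the unit \(L\)-ball, I expect to reach the key identity
\[
\rho_L(\mu,\nu)=\sup\{\,|\mu(g)-\nu(g)|\ :\ g\in\mathcal L_1'\,\}\qquad(\mu,\nu\in S_\eta).
\]
This identity is where the real work lies: one must check that restricting to the \(C_c(\mathcal G)\)-elements vanishing on \(\mathcal G^{(0)}\) does not shrink the supremum, and this is precisely the step that links the abstract seminorm \(L\) to the concrete hypothesis on \(\mathcal L_1'\). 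I expect this to be the main obstacle.

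Granting the identity, the metric properties follow quickly. Total boundedness makes \(\mathcal L_1'\) norm-bounded, say by \(R\), so \(\rho_L(\mu,\nu)\le 2R\) and each fibre has finite diameter, giving finiteness. For point-separation, \(\rho_L(\mu,\nu)=0\) forces \(\mu=\nu\) on the linear span of \(\mathcal L_1'\); together with \(\mu=\nu\) on \(C(\mathcal G^{(0)})\) and the decomposition \(f=(f-E(f))+E(f)\), this yields \(\mu=\nu\) on \(C_c(\mathcal G)\), hence on all of \(C_r^{\ast}(\mathcal G)\) by density and continuity. So \(\rho_L\) is a genuine metric on each \(S_\eta\).

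Finally, for continuity of \(\iota\), fix \(\mu\in S_\eta\) and \(\varepsilon>0\). Total boundedness furnishes a finite \(\varepsilon/3\)-net \(g_1,\dots,g_n\in\mathcal L_1'\) for \(\|\cdot\|_{\mathrm{red}}\). On the weak\(^*\)-neighbourhood \(U=\{\nu\in S_\eta:|\mu(g_j)-\nu(g_j)|<\varepsilon/3,\ j=1,\dots,n\}\), for any \(\nu\in U\) and any \(g\in\mathcal L_1'\) I choose the nearest \(g_j\) and estimate \(|\mu(g)-\nu(g)|\) by the triangle inequality: the two outer terms are bounded by \(\|g-g_j\|_{\mathrm{red}}<\varepsilon/3\) since states have norm one, while the middle term is \(<\varepsilon/3\) by the choice of \(U\); hence \(\rho_L(\mu,\nu)\le\varepsilon\). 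This shows \(\iota\) is continuous, and the compact-to-Hausdorff argument then forces the weak\(^*\)- and \(\rho_L\)-topologies to agree on \(S_\eta\) for every state \(\eta\) on \(\mathcal K\), so that \((C_r^{\ast}(\mathcal G),L)\) is a quasi-compact quantum metric space. Once the reformulation identity is in hand, this last estimate and the homeomorphism argument are entirely standard.
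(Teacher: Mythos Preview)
Your step (iii)---choose a finite net in $\mathcal L_1'$ from total boundedness, form the associated weak$^*$-neighbourhood, and bound $|(\mu-\nu)(f)|$ for $L(f)\le1$ via the triangle inequality after replacing $f$ by $f-f|_{\mathcal G^{(0)}}\in\mathcal L_1'$---is exactly the paper's argument; the paper then cites Rieffel's results where you run the compact-to-Hausdorff step explicitly. Where you diverge is in insisting on the exact identity $\rho_L(\mu,\nu)=\sup\{|(\mu-\nu)(g)|:g\in\mathcal L_1'\}$ as a preliminary. This is both unnecessary and, by your proposed route, unattainable: density of $C_c(\mathcal G)$ in the reduced norm gives no control whatsoever over $L$, so there is in general no ``approximation argument inside the unit $L$-ball''. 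The paper sidesteps this entirely by observing that for $f\in C_c(\mathcal G)$ with $L(f)\le1$ one has $(\mu-\nu)(f)=(\mu-\nu)(f')$ with $f':=f-f|_{\mathcal G^{(0)}}\in\mathcal L_1'$, which is all the continuity estimate requires---no identity, only this one-line substitution. (Both proofs tacitly take the supremum defining $\rho_L$ over $C_c(\mathcal G)$; this is automatic for the intended application $L=L_\ell^k$, where $L$ equals $+\infty$ off $C_c(\mathcal G)$.) Your point-separation argument is likewise more elaborate than needed: $\rho_L(\mu,\nu)=0$ already forces $\mu=\nu$ on the dense set $\{a:L(a)<\infty\}$, with no decomposition through $\mathcal L_1'$ required.
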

			
			\begin{proof}
				Let $\mu \in S_{\eta}$ and $\varepsilon > 0$ be given and let $B (\mu, \varepsilon)$ be the $\rho_{L}$ ball of radius $\varepsilon$ about $\mu$ in~$S_{\eta}.$ It suffices to show (see \cite[Proposition 1.4 and Theorem 1.8]{Rieffel-1998-Metrics-on-state-from-action-of-cmpt-gp}) that $B(\mu, \varepsilon)$ contains a weak*-neighbourhood of $\mu.$ Using the total boundedness of $\mathcal L_1'$ we choose $f_1, \cdots, f_n \in \mathcal L_1'$ such that $\|\cdot\|_{\mathrm {red}}$-balls of radius $\frac {\varepsilon} {4}$ about $f_j$'s cover $\mathcal L_1'.$ We will show that the weak*-neighbourhood 
				\[
				 \mathcal O  =  \mathcal O \left (\mu, \{f_j\}, \frac {\varepsilon} {4} \right )  : =  \left \{\nu \in S_{\eta}\ :\ \left \lvert (\mu - \nu) (f_j) \right \rvert < \frac {\varepsilon} {4},\ 1 \leq j \leq n \right \}
				\]
			%	\Bea \mathcal O & = & \mathcal O \left (\mu, \{f_j\}, \frac {\varepsilon} {4} \right ) \\ & : = & \left \{\nu \in S_{\eta}\ :\ \left \lvert (\mu - \nu) (f_j) \right \rvert < \frac {\varepsilon} {4},\ 1 \leq j \leq n \right \} \Eea
			 is contained in $B(\mu, \varepsilon).$ Take any $f \in C_c (\mathcal G)$ with $L (f) \leq 1.$ Then $f' : = f - f \rvert_{\mathcal G^{(0)}} \in \mathcal L_1'.$ Choose \(f_i\) such that $\|f' - f_i\|_{\mathrm {red}} < \frac {\varepsilon} {4}$ for \(1\leq i\leq n\).  Then for any $\nu \in \mathcal O$ we have 
			 \Bea
			 		\left \lvert \mu (f) - \nu (f) \right \rvert = \left \lvert \mu (f') - \nu (f') \right \rvert  & \leq &  \left \lvert \mu (f' - f_i) - \nu (f' - f_i) \right \rvert + \left \lvert \mu (f_i) - \nu (f_i) \right \rvert \\  & \leq &  \|\mu - \nu\| \|f'- f_i \|_{\mathrm {red}} + \left \lvert (\mu - \nu) (f_i) \right \rvert \\ & < &  \frac {2 \varepsilon} {4} + \frac {\varepsilon} {4} =  \frac {3\varepsilon} {4}.   
			 	\Eea
			%	\Bea
			%	\left \lvert \mu (f) - \nu (f) \right \rvert & = & \left \lvert \mu (f') - \nu (f') \right \rvert \\ & \leq & \left \lvert \mu (f' - f_i) - \nu (f' - f_i) \right \rvert + \left \lvert \mu (f_i) - \nu (f_i) \right \rvert \\ & \leq & \|\mu - \nu\| \|f'- f_i \|_{\mathrm {red}} + \left \lvert (\mu - \nu) (f_i) \right \rvert \\ & < &  \frac {2 \varepsilon} {4} + \frac {\varepsilon} {4} \\ & = & \frac {3\varepsilon} {4}.   
			%	\Eea
				This shows that $\rho_{L} (\mu, \nu) \leq \frac {3 \varepsilon} {4} < \varepsilon.$ Hence $\nu \in B(\mu, \varepsilon),$ i.e., $\mathcal O \subseteq B(\mu, \varepsilon),$ as required.
			\end{proof}

			Now we define a sequence of quasi-seminorms on $C^{\ast}_{r}(\mathcal G)$ for an étale groupoid $\mathcal G.$ The definition is inspired by \cite[Section 3]{Long-Wu-2017-Twisted-group-C-alg-as-CQMS}. To that end, given a length function $\ell$ on $\mathcal G$, recall the sequence of adjointable operators $\Delta^{k}$ from Proposition~\ref{prop21}.
			\begin{defn}\label{defn12}
				Let $\ell :\mathcal{G}\to [0,+\infty)$ be a length function on an etale groupoid $\mathcal{G}$.  For any $k \in \mathbb N,$ we define $L_{\ell}^{k}:C_{r}^{\ast}(\mathcal{G})\to [0,+\infty]$ by
				
				$$L_{\ell}^k (a) : = \begin{cases} \left \|\Delta^k (a) \right \|_{\mathrm {adj}}, \quad a \in C_c (\mathcal G), \\ + \infty, \quad a \in C_r^{\ast} (\mathcal G) \setminus C_c (\mathcal G). \end{cases}$$
			\end{defn}
			
			\begin{lem}
				Let $\mathcal G$ be an étale groupoid equipped with a length function $\ell$. Then for any natural number $k,$ $L_{\ell}^k$ is a Lipschitz quasi-seminorm on $C_r^{\ast} (\mathcal G).$ The kernel of $L_{\ell}^{k}$ is $C(\mathcal G^{(0)})$ for all $k$.
			\end{lem}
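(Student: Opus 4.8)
The plan is to verify directly that $L_\ell^k$ satisfies the three requirements of Definition~\ref{def6}, together with the underlying seminorm axioms, by invoking the results already established in this section. The key structural observation is that the finiteness locus of $L_\ell^k$ is exactly $C_c(\mathcal G)$: by definition $L_\ell^k(a) = +\infty$ off $C_c(\mathcal G)$, while for $a \in C_c(\mathcal G)$ the previous Lemma guarantees that $\Delta^k(a)$ extends to a bounded adjointable operator, so $L_\ell^k(a) = \|\Delta^k(a)\|_{\mathrm{adj}} < +\infty$. Throughout I use the standing assumption that $\mathcal G^{(0)}$ is compact, so that $\mathcal E \in C_c(\mathcal G)$ and $C(\mathcal G^{(0)}) \subseteq C_c(\mathcal G)$.

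First I would check that $L_\ell^k$ is a seminorm and is $\ast$-invariant. Since $a \mapsto \Delta^k(a)$ is linear and $\|\cdot\|_{\mathrm{adj}}$ is a seminorm, $L_\ell^k$ is subadditive and homogeneous on $C_c(\mathcal G)$; the extension to $[0,+\infty]$-values causes no trouble because $C_c(\mathcal G)$ is a subspace (if one of $a,b$ lies outside $C_c(\mathcal G)$, so does $a+b$, and both sides of the subadditivity inequality are $+\infty$). For condition~$(1)$, the involution preserves $C_c(\mathcal G)$, and Proposition~\ref{prop21} gives $\Delta^k(a)^{\ast} = (-1)^k\Delta^k(a^{\ast})$; taking adjointable norms yields $L_\ell^k(a^{\ast}) = L_\ell^k(a)$ on $C_c(\mathcal G)$, while both sides are $+\infty$ elsewhere.

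The core of the argument is the identification of the kernel $\mathcal K = \{a : L_\ell^k(a)=0\}$ and its closedness (condition~$(2)$). Since $L_\ell^k$ is finite only on $C_c(\mathcal G)$, an element $a$ lies in $\mathcal K$ if and only if $a \in C_c(\mathcal G)$ and $\|\Delta^k(a)\|_{\mathrm{adj}} = 0$, i.e. $\Delta^k(a) = 0$. By Proposition~\ref{prop22} this happens precisely when $a \in C(\mathcal G^{(0)})$, so $\mathcal K = C(\mathcal G^{(0)})$; in particular $1_{C_r^{\ast}(\mathcal G)} = \mathcal E \in C(\mathcal G^{(0)}) = \mathcal K$. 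The one genuinely topological point, which I expect to be the main obstacle, is that $C(\mathcal G^{(0)})$ is norm-closed in $C_r^{\ast}(\mathcal G)$. I would settle this by observing that functions supported on $\mathcal G^{(0)}$ form a sub-$\ast$-algebra on which the convolution collapses to pointwise multiplication: for $f,g \in C(\mathcal G^{(0)})$ the only $\beta \in \mathcal G_{s(\gamma)}$ contributing to $(f\ast g)(\gamma)$ is $\beta = s(\gamma)$, which forces $\gamma \in \mathcal G^{(0)}$ and $(f\ast g)(\gamma) = f(\gamma)g(\gamma)$, with the involution reducing to complex conjugation. Hence the inclusion $C(\mathcal G^{(0)}) \hookrightarrow C_r^{\ast}(\mathcal G)$ is an injective $\ast$-homomorphism of $C^{\ast}$-algebras, therefore isometric and with norm-closed range, so $\mathcal K$ is a closed subspace containing the identity.

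Finally, for condition~$(3)$, the finiteness locus $\mathcal A = \{a : L_\ell^k(a) < +\infty\}$ equals $C_c(\mathcal G)$ by the observation above, and $C_c(\mathcal G)$ is dense in $C_r^{\ast}(\mathcal G)$ since the reduced $C^{\ast}$-algebra is by construction the completion of $C_c(\mathcal G)$ in the reduced norm. Assembling these three points establishes that $L_\ell^k$ is a Lipschitz quasi-seminorm with $\mathrm{ker}(L_\ell^k) = C(\mathcal G^{(0)})$, which is the assertion of the lemma.
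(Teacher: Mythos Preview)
Your proof is correct and follows the same route as the paper's two-line argument, which simply invokes Propositions~\ref{prop21} and~\ref{prop22}; you have merely fleshed out the details, in particular the closedness of $C(\mathcal G^{(0)})$ in $C_r^{\ast}(\mathcal G)$, which the paper leaves implicit. One small slip: it is not true that if one of $a,b$ lies outside $C_c(\mathcal G)$ then so does $a+b$ (take $b=-a$ with $a\notin C_c(\mathcal G)$), but subadditivity is unaffected since the right-hand side is already $+\infty$ whenever either summand lies outside $C_c(\mathcal G)$.
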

			
			\begin{proof}
				It is clear that $L_{\ell}^k$ is a seminorm on $C_r^{\ast} (\mathcal G)$ permitted to take the value $+\infty.$ The result then follows directly from Proposition~\ref{prop21} and Proposition~\ref{prop22}.
			\end{proof}
		%	Now we are going to need some extra properties of the length functions on groupoids to discuss the quasi compact quantum metric space structure on $C^{\ast}_{r}(\mathcal G)$. For the following definitions the reader is referred to \cite[Definition 2.5]{Weygandt-2024-Rapid-decay-for-etale-gpd}.
		%	\begin{defn}
		%		Let $\mathcal G$ be an étale groupoid. A length function $\ell : \mathcal G \longrightarrow [0, +\infty)$ is said to be proper if for every subset $K \subseteq \mathcal G \setminus \mathcal G^{(0)},$ finiteness of the quantity $\sup\limits_{\gamma \in K} \ell (\gamma)$ implies that $K$ is pre-compact.   
		%	\end{defn}
			
			Let \(\mathcal{G}\) be an {\'e}tale groupoid with a length function \(\ell\). For \(f\in C_c(\mathcal{G})\), we define
			\[
			 	\|f\|_{2,p,s,\ell} := \sup\limits_{x \in \mathcal G^{(0)}} \biggl (\sum\limits_{\gamma \in \mathcal G_x} \left \lvert f(\gamma) \right \rvert^2 (1 + \ell (\gamma))^{2 p} \biggr )^{\frac {1} {2}} \textup{ and } \|f\|_{2,p,r, \ell} : = \sup\limits_{x \in \mathcal G^{(0)}} \biggl (\sum\limits_{\gamma \in \mathcal G_x} \left \lvert f \left (\gamma^{-1} \right ) \right \rvert^2 (1 + \ell (\gamma))^{2 p} \biggr )^{\frac {1} {2}}
			\]
			where \(p>0\). 
			\begin{defn}
				Let $\mathcal G$ be an étale groupoid. We say that $\mathcal G$ has the rapid decay property with respect to the length function \(\ell\) if there exist constants $C, p > 0$ such that
				$\|f\|_{\mathrm {red}} \leq C \|f\|_{2, p, \ell},$
				for all $f \in C_c (\mathcal G),$ where
				\[
				\hnorm{f}_{2,p,\ell} = \max \{ \hnorm{f}_{2,p,s,\ell}, \hnorm{f}_{2,p,r,\ell}\}.
				\]
				%	$$\|f\|_{2, p, \ell} : = \max \left (\sup\limits_{x \in \mathcal G^{(0)}} \left (\sum\limits_{\gamma \in \mathcal G_x} \left \lvert f(\gamma) \right \rvert^2 (1 + \ell (\gamma))^{2 p} \right )^{\frac {1} {2}} , \sup\limits_{x \in \mathcal G^{(0)}} \left (\sum\limits_{\gamma \in \mathcal G_x} \left \lvert f \left (\gamma^{-1} \right ) \right \rvert^2 (1 + \ell (\gamma))^{2 p} \right )^{\frac {1} {2}} \right )$$
			%	and, $\mathcal G_x$ is the source fiber of $x \in \mathcal G^{(0)}.$
			%	We let 
			%	$$\|f\|_{2,p,s,\ell} : = \sup\limits_{x \in \mathcal G^{(0)}} \left (\sum\limits_{\gamma \in \mathcal G_x} \left \lvert f(\gamma) \right \rvert^2 (1 + \ell (\gamma))^{2 p} \right )^{\frac {1} {2}},$$ 
			%	and,
			%	$$\|f\|_{2,p,r, \ell} : = \sup\limits_{x \in \mathcal G^{(0)}} \left (\sum\limits_{\gamma \in \mathcal G_x} \left \lvert f \left (\gamma^{-1} \right ) \right \rvert^2 (1 + \ell (\gamma))^{2 p} \right )^{\frac {1} {2}},$$
			%	so that
			%	$$\|f\|_{2,p, \ell} = \max \left (\|f\|_{2,p,s, \ell}, \|f\|_{2,p,r, \ell} \right ).$$
			\end{defn}

			For the rest of the paper, we focus on the case where \(\mathcal{G}\) is the transformation groupoid \(\Gamma \ltimes X\), with a compact Hausdorff space \(X\), as discussed in Example~\ref{ex2}. The following observation will be used to identify the decomposition of the state space \(\mathcal{S}(C^{\ast}_{r}(\mathcal{G}))\).
			 
			 \begin{observation}\label{lem3}
			 	Let $\Gamma$ be a discrete group acting on a compact Hausdorff space $X.$ Consider the associated transformation groupoid $\mathcal G : = \Gamma \ltimes X.$  Then $C(X)$ with pointwise multiplication and the sup norm is a $C^{\ast}$-subalgebra of $C_r^{\ast} (\mathcal G).$
			 \end{observation}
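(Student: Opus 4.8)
The plan is to exhibit an explicit $\ast$-embedding of $C(X)$ into $C_r^{\ast}(\mathcal{G})$ and to verify that it is isometric, so that its image is a closed $\ast$-subalgebra. First I would define the map $\iota \colon C(X) \to C_c(\mathcal{G})$ sending $\phi$ to the function $\tilde{\phi}$ determined by $\tilde{\phi}(g,x) = \phi(x)$ if $g = e$ and $\tilde{\phi}(g,x) = 0$ otherwise. Because $X$ is compact and the unit space $\mathcal{G}^{(0)} = \{e\} \times X$ is clopen in the {\'e}tale groupoid $\mathcal{G}$, the function $\tilde{\phi}$ is continuous with compact support, so $\iota$ genuinely lands in $C_c(\mathcal{G}) \subseteq C_r^{\ast}(\mathcal{G})$; clearly $\iota$ is linear and injective.

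Next I would check that $\iota$ is a $\ast$-homomorphism. The only computation required is to track the transformation-groupoid operations: for $\gamma = (g,x)$ and $\beta = (h,x) \in \mathcal{G}_{s(\gamma)} = \mathcal{G}_x$ one has $\beta^{-1} = (h^{-1}, h \cdot x)$ and hence $\gamma\beta^{-1} = (gh^{-1}, h\cdot x)$. Substituting into the convolution formula and using that $\tilde{\phi}$ is supported where $g = e$ collapses the sum to the single term $h = g$, yielding $\tilde{\phi} \ast \tilde{\psi} = \widetilde{\phi\psi}$; a similar but shorter computation with $\gamma^{-1} = (g^{-1}, g\cdot x)$ gives $\tilde{\phi}^{\,\ast} = \widetilde{\overline{\phi}}$. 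Thus $\iota$ is an injective $\ast$-homomorphism of $C^{\ast}$-algebras.

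To finish, I would establish that $\iota$ is isometric, so that its image is complete and therefore closed. The clean self-contained route is to compute $\lambda(\tilde{\phi})$ explicitly: the same bookkeeping as above shows that for $\xi \in C_c(\mathcal{G})$ and $\gamma = (g,x)$ one has $(\tilde{\phi} \ast \xi)(g,x) = \phi(g\cdot x)\,\xi(g,x) = \phi(r(\gamma))\,\xi(\gamma)$, so $\lambda(\tilde{\phi})$ is precisely the operator of pointwise multiplication by the bounded function $\phi \circ r$ on the Hilbert module $L^2(\mathcal{G})$. Its adjointable-operator norm is $\|\phi \circ r\|_{\infty}$, and since $r$ is onto $X$ (indeed $r(e,x) = x$) this equals $\|\phi\|_{\infty}$, whence $\|\iota(\phi)\|_{\mathrm{red}} = \|\phi\|_{\infty}$. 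Alternatively, one may simply invoke the standard fact that any injective $\ast$-homomorphism between $C^{\ast}$-algebras is automatically isometric.

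Since $\iota$ is an isometric $\ast$-homomorphism out of the complete algebra $C(X)$, its image is complete, hence a closed $\ast$-subalgebra of $C_r^{\ast}(\mathcal{G})$, which is exactly the assertion. The only mildly delicate point is the norm identity $\|\lambda(\tilde{\phi})\|_{\mathrm{adj}} = \|\phi \circ r\|_{\infty}$: the upper bound is immediate from the Hilbert-module norm, while the reverse bound needs a bump function concentrated near a point where $|\phi \circ r|$ nearly attains its supremum, using the local homeomorphism property of the {\'e}tale structure. This is the step I expect to require the most care, though it is not deep; everything else is a direct unwinding of the definitions.
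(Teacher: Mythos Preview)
Your argument is correct. The paper, however, records this as an \emph{observation} without proof: it is a well-known fact about transformation groupoids (more generally, $C_0(\mathcal{G}^{(0)})$ always embeds as a $C^{\ast}$-subalgebra of $C_r^{\ast}(\mathcal{G})$ for any \'etale groupoid $\mathcal{G}$), so there is nothing in the paper to compare your proof against. What you have written is exactly the standard verification one would supply if asked to justify the observation.

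One small simplification: the step you flag as ``mildly delicate'' --- the lower bound $\|\lambda(\tilde{\phi})\|_{\mathrm{adj}} \geq \|\phi\|_{\infty}$ --- does not require bump functions or the local-homeomorphism property. It suffices to test on the unit $\mathcal{E}$ (see \eqref{eq:0}): your own computation gives $(\tilde{\phi} \ast \mathcal{E})(g,x) = \phi(g\cdot x)\,\mathcal{E}(g,x)$, which equals $\phi(x)$ when $g=e$ and vanishes otherwise, so $\|\lambda(\tilde{\phi})(\mathcal{E})\| = \|\phi\|_{\infty}$ while $\|\mathcal{E}\| = 1$. Of course, the alternative you mention --- that injective $\ast$-homomorphisms between $C^{\ast}$-algebras are automatically isometric --- dispatches the norm question in one line and is the cleanest route.
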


			From now on, we will be working with the sequence of Lipschitz quasi-seminorms mentioned in Definition~\ref{defn12}.
			Based on the previous observation, it can be established that for any \( k \in \mathbb{N}\), the state space of \( \text{ker}(L_{\ell}^{k}) (= C(X)) \) can be identified with the space of probability measures on \( X \) via the Riesz representation theorem. We will continue to denote the probability measure corresponding to a state \( \eta\) on \(C(X)\) simply as \(\eta\). Let us denote the set of probability measures on \( X \) by \( \mathcal{M}(X) \). We thus have the following decomposition:
			
		\begin{displaymath}
				\mathcal{S}(C^{\ast}_{r}(\mathcal G))=\bigcup\limits_{\eta\in\mathcal{M}(X)}S_{\eta},
			\end{displaymath}
			where $\mathcal G=\Gamma\ltimes X$ for some compact Hausdorff space $X$.
			
			\begin{lem}\label{lem4}
				Let $\Gamma$ be a discrete group acting on a compact Hausdorff space $X.$ Let $\mathcal G : = \Gamma \ltimes X$ be the associated transformation groupoid equipped with a continuous proper length function $\ell$; let $p > 0.$ Then there exists a real number $\alpha > 0$ such that
				$$\|f\|_{2, p, \ell} \leq \alpha L_{\ell}^k (f)$$ \vspace{2mm}
				for any  $f \in C_c (\mathcal G)$ with $f \rvert_{X} \equiv 0$ and any integer $k > p.$
			\end{lem}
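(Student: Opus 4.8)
The plan is to sandwich the two weighted $\ell^2$-norms defining $\hnorm{f}_{2,p,\ell}$ between a single evaluation of $\Delta^k(f)$ and the operator norm $L_{\ell}^k(f)=\hnorm{\Delta^k(f)}_{\mathrm{adj}}$. The clean test vector is the unit $\mathcal E$: since $\mathcal G^{(0)}=X$ is compact, $\mathcal E\in C_c(\mathcal G)$, and $\hnorm{\mathcal E}=1$ because for each $x$ the only $\gamma\in\mathcal G_x$ with $\mathcal E(\gamma)\ne 0$ is the unit $(e,x)$. The computation carried out in the proof of Proposition~\ref{prop22} gives $\Delta^k(f)(\mathcal E)(\gamma)=\ell(\gamma)^k f(\gamma)$, so that
\[
L_{\ell}^k(f)^2=\hnorm{\Delta^k(f)}_{\mathrm{adj}}^2\ \geq\ \hnorm{\Delta^k(f)(\mathcal E)}^2=\sup_{x\in\mathcal G^{(0)}}\sum_{\gamma\in\mathcal G_x}\ell(\gamma)^{2k}\,\lvert f(\gamma)\rvert^2 .
\]
Thus it will be enough to compare the weight $\ell(\gamma)^{2k}$ with the weight $(1+\ell(\gamma))^{2p}$ occurring in $\hnorm{f}_{2,p,s,\ell}^2$ on the support of $f$.

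The decisive point, and the step I expect to be the real obstacle, is to obtain a \emph{uniform} positive lower bound $m_0:=\inf\{\ell(g,x):g\ne e,\ x\in X\}>0$; this is exactly what forces the constant $\alpha$ to be independent of $f$. I would argue by contradiction using properness: if there were $(g_n,x_n)$ with $g_n\ne e$ and $\ell(g_n,x_n)\to 0$, then $K=\{(g_n,x_n)\}$ is a subset of $\mathcal G\setminus\mathcal G^{(0)}$ with $\sup_K\ell<\infty$, hence pre-compact by properness. Since $\Gamma$ is discrete and $X$ is compact, a convergent subsequence forces $g_{n_j}$ to be eventually a fixed $g^{*}\ne e$ and $x_{n_j}\to x^{*}$, whence continuity of $\ell$ gives $\ell(g^{*},x^{*})=0$, i.e.\ $(g^{*},x^{*})\in\mathcal G^{(0)}$, contradicting $g^{*}\ne e$.

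With $m_0>0$ in hand the comparison becomes elementary: because $f\rvert_X\equiv 0$, every $\gamma=(g,x)$ with $f(\gamma)\ne 0$ has $g\ne e$ and therefore $\ell(\gamma)\ge m_0$; and since $k>p$ the function $t\mapsto (1+t)^{2p}/t^{2k}$ tends to $0$ as $t\to\infty$ and is continuous on $[m_0,\infty)$, so $C:=\sup_{t\ge m_0}(1+t)^{2p}/t^{2k}<\infty$ is a constant independent of $f$. Hence $(1+\ell(\gamma))^{2p}\le C\,\ell(\gamma)^{2k}$ wherever $f(\gamma)\ne 0$, and summing over $\gamma\in\mathcal G_x$ and taking the supremum over $x$ yields $\hnorm{f}_{2,p,s,\ell}^2\le C\,L_{\ell}^k(f)^2$.

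Finally, to handle the range norm I would pass to $f^{\ast}$. One checks directly that $\hnorm{f}_{2,p,r,\ell}=\hnorm{f^{\ast}}_{2,p,s,\ell}$ (using $\lvert f^{\ast}(\gamma)\rvert=\lvert f(\gamma^{-1})\rvert$ and $\ell(\gamma^{-1})=\ell(\gamma)$), that $f^{\ast}\rvert_X\equiv 0$, and that $L_{\ell}^k(f^{\ast})=L_{\ell}^k(f)$ by Proposition~\ref{prop21} together with $\hnorm{\Delta^k(f)^{\ast}}_{\mathrm{adj}}=\hnorm{\Delta^k(f)}_{\mathrm{adj}}$. Applying the source estimate of the previous paragraph to $f^{\ast}$ then gives $\hnorm{f}_{2,p,r,\ell}^2\le C\,L_{\ell}^k(f)^2$. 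Taking $\alpha:=\sqrt{C}$ and recalling that $\hnorm{f}_{2,p,\ell}=\max\{\hnorm{f}_{2,p,s,\ell},\hnorm{f}_{2,p,r,\ell}\}$ completes the argument.
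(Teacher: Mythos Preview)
Your argument is correct and follows the same overall strategy as the paper: both apply $\Delta^k(f)$ and $\Delta^k(f^{\ast})$ to the unit $\mathcal E$ to reduce matters to comparing the weights $(1+\ell)^{2p}$ and $\ell^{2k}$ on the support of $f$, and both pass to $f^{\ast}$ to handle the range norm. The only real difference is in how that weight comparison is executed. The paper splits the sum at a level $n$, uses properness to trap $\{\ell\le n\}$ inside $\Gamma_n\times X$ with $\Gamma_n$ finite, and then invokes the estimate from \cite[Lemma~3.3]{Long-Wu-2017-Twisted-group-C-alg-as-CQMS}, arriving at $\alpha=\sup_{g\in\Gamma_n\setminus\{e\},\,x\in X}\bigl((1+\ell(g,x)^{-1})^{2k}+2^{2p}n^{2p-2k}\bigr)^{1/2}$; you instead extract from properness and continuity a uniform lower bound $m_0=\inf_{g\ne e,\,x\in X}\ell(g,x)>0$ and bound $(1+t)^{2p}/t^{2k}$ on $[m_0,\infty)$ in one step. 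Your route is a little more self-contained and yields a cleaner constant, while the paper's stays closer to the Long--Wu template it is generalizing.
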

			
			\begin{proof}
				Recall the multiplicative identity $\mathcal E$ in $C_r^{\ast} (\mathcal G)$ given by   
				
				$$\mathcal E (g, x) = \begin{cases} 1, \quad \text {if}\ g = e, \\ 0, \quad \text {otherwise}. \end{cases}$$ 
				Let $f \in C_c (\mathcal G)$.  Since 
			   $\|\Delta^k (f)\|_{\mathrm {adj}} = \|\Delta^k (f^{\ast})\|_{\mathrm {adj}}$ and $\|\mathcal E\| = 1$, it turns out that 
			   \begin{equation}\label{equ-Lip-norm-ineq}
			   	\max \left (\left \|\Delta^k (f) (\mathcal E) \right \|^2, \left \|\Delta^k (f^{\ast}) (\mathcal E) \right \|^2 \right ) \leq \left (L_{\ell}^k (f) \right )^2.
			   \end{equation}
			
				Now, 
				
				\Bea
				\left \|\Delta^k (f) (\mathcal E) \right \|^2 & = & \sup\limits_{x \in X} \sum\limits_{g \in \Gamma} \left \lvert \Delta^k (f) (\mathcal E) (g, x) \right \rvert^2 \\ & = & \sup\limits_{x \in X} \sum\limits_{g \in \Gamma} \left \lvert \sum\limits_{h \in \Gamma} \left (\ell (g, x) - \ell (h, x) \right )^k f \left (gh^{-1}, h \cdot x \right )\ \mathcal {E} (h, x) \right \rvert^2 \\ & = & \sup\limits_{x \in X} \sum\limits_{g \in \Gamma} \ell (g, x)^{2k} \left \lvert f(g, x) \right \rvert^2.
				\Eea
				
				Similarly, by replacing $f$ by $f^{\ast}$, we have
				\[
				\left \|\Delta^k (f^{\ast}) (\mathcal E) \right \|^2  = \sup\limits_{x \in X} \sum\limits_{g \in \Gamma} \ell (g, x)^{2k} \left \lvert f^{\ast} (g, x) \right \rvert^2  =  \sup\limits_{x \in X} \sum\limits_{g \in \Gamma} \ell (g, x)^{2k} \left \lvert f \left (g^{-1}, g \cdot x \right ) \right \rvert^2. 
				\]
			%	\Bea
			%	\left \|\Delta^k (f^{\ast}) (\mathcal E) \right \|^2 & = & \sup\limits_{x \in X} \sum\limits_{g \in \Gamma} \ell (g, x)^{2k} \left \lvert f^{\ast} (g, x) \right \rvert^2 \\ & = & \sup\limits_{x \in X} \sum\limits_{g \in \Gamma} \ell (g, x)^{2k} \left \lvert f \left (g^{-1}, g \cdot x \right ) \right \rvert^2 
			%	\Eea
			%	\vspace{5mm}
	        Using the above computation in~\eqref{equ-Lip-norm-ineq}, we have
				
				$$\max \biggl (\sup\limits_{x \in X} \sum\limits_{g \in \Gamma} \ell (g, x)^{2k} \left \lvert f(g, x) \right \rvert^2, \sup\limits_{x \in X} \sum\limits_{g \in \Gamma} \ell (g, x)^{2k} \left \lvert f \left (g^{-1}, g \cdot x \right ) \right \rvert^2 \biggr ) \leq \bigl (L_{\ell}^k (f) \bigr )^2.$$
			For any $g \in \Gamma$ with $\ell (g, x) \geq n \geq 1$ and $k > p$ we have 
				
				$$\left (1 + \ell (g, x) \right )^{2p} \leq 2^{2p} \ell (g, x)^{2p} \leq 2^{2p} n^{2p - 2k} \left (\ell (g, x) \right )^{2k}.$$
				
				Since $\ell$ is a proper length function, for any $n \in \mathbb N,$ there exists a finite subset $\Gamma_n \subseteq \Gamma$ such that the set 
				$$\left \{(g, x) \in \mathcal G\ :\ \ell (g, x) \leq n \right \} \subseteq \Gamma_n \times X.$$
				%Thus for any $x \in X$
				Then by similar estimation as in \cite[Lemma 3.3]{Long-Wu-2017-Twisted-group-C-alg-as-CQMS}, we have

				$$\sum\limits_{g \in \Gamma} \left \lvert f(g, x) \right \rvert^2 \left (1 + \ell (g, x) \right )^{2p} \leq \alpha^2 \left (L_{\ell}^k (f) \right )^2$$
				where 
				
				$$\alpha : = \sup\limits_{\substack {g \in \Gamma_n \setminus \{e\} \\ x \in X}} \left (\left (1 + \ell (g, x)^{-1} \right )^{2k} + 2^{2p} n^{2p - 2k} \right )^{\frac {1} {2}} < + \infty.$$
				Since both $\alpha$ and $L_{\ell}^k (f)$ are independent of $x \in X,$ it follows that
				
				$$\sup\limits_{x \in X} \biggl(\sum\limits_{g \in \Gamma} \left \lvert f(g, x) \right \rvert^2 \left (1 + \ell (g, x) \right )^{2p}\biggr)^{\frac{1}{2}} \leq \alpha L_{\ell}^k (f).$$
				Similarly, we can show that
				$$\sup\limits_{x \in X} \biggl(\sum\limits_{g \in \Gamma} \left \lvert f \left (g^{-1}, g \cdot x \right ) \right \rvert^2 \left (1 + \ell (g, x) \right )^{2p} \biggr)^{\frac{1}{2}}\leq \alpha L_{\ell}^k (f).$$
				This completes the proof.
				\end{proof}
			
			\begin{prop}
				Let $\Gamma$ be a discrete group acting on a compact Hausdorff space $X.$ Let $\mathcal G : = \Gamma \ltimes X$ be the associated transformation groupoid equipped with a continuous proper length function $\ell$ and let $p > 0.$ If $\mathcal G$ has the rapid decay property with respect to $\ell$ and $p,$ then for any integer $k > p$ the diameter of the metric space $\big (S_{\eta}, \rho_{L_{\ell}^k} \big )$ is finite, where $\eta$ is a fixed probability measure on $X$. Moreover, the diameter is uniformly bounded over the probability measures on $X$.    
			\end{prop}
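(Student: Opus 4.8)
The plan is to reduce the diameter computation to the reduced-norm estimate already packaged in Lemma~\ref{lem4}, exploiting that all states in a fixed fibre agree on $C(X)$. Unwinding definitions, the diameter of $(S_\eta, \rho_{L_\ell^k})$ is $\sup_{\mu,\nu \in S_\eta} \rho_{L_\ell^k}(\mu,\nu)$, and since $L_\ell^k \equiv +\infty$ off $C_c(\mathcal{G})$,
\[
\rho_{L_\ell^k}(\mu,\nu) = \sup\left\{ |\mu(f)-\nu(f)| \ :\ f \in C_c(\mathcal{G}),\ L_\ell^k(f) \leq 1 \right\}.
\]
First I would use that every $\mu,\nu \in S_\eta$ restrict to the same state $\eta$ on $\ker(L_\ell^k) = C(X)$. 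Given a test function $f \in C_c(\mathcal{G})$ with $L_\ell^k(f) \leq 1$, I split $f = f\rvert_X + f'$ with $f' := f - f\rvert_X$; then $f'\rvert_X \equiv 0$ and $L_\ell^k(f') = L_\ell^k(f) \leq 1$ (the seminorm ignores the $C(X)$-part), so $f' \in \mathcal{L}_1'$. Because $\mu(f\rvert_X) = \nu(f\rvert_X) = \eta(f\rvert_X)$, the difference collapses to $\mu(f)-\nu(f) = \mu(f')-\nu(f')$, and using $\|\mu\| = \|\nu\| = 1$ one gets $|\mu(f)-\nu(f)| \leq 2\|f'\|_{\mathrm{red}}$.

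This turns the whole problem into bounding $\|f'\|_{\mathrm{red}}$ uniformly over $f' \in \mathcal{L}_1'$, which is exactly where the rapid decay hypothesis and Lemma~\ref{lem4} combine: for $f' \in \mathcal{L}_1'$ and $k > p$,
\[
\|f'\|_{\mathrm{red}} \leq C \|f'\|_{2,p,\ell} \leq C\alpha\, L_\ell^k(f') \leq C\alpha,
\]
with $C$ the rapid-decay constant and $\alpha$ the constant produced in Lemma~\ref{lem4}. Feeding this back gives $\rho_{L_\ell^k}(\mu,\nu) \leq 2C\alpha$ for all $\mu,\nu \in S_\eta$, so the diameter is at most $2C\alpha < +\infty$. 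For the uniformity statement I would simply note that neither $C$ (which is intrinsic to the rapid decay property of $\mathcal{G}$) nor $\alpha$ (which, from its explicit form in Lemma~\ref{lem4}, depends only on $\Gamma$, $X$, $\ell$, $p$, $k$ through the finite set $\Gamma_n$ and the integer $n$) involves the measure $\eta$; hence $2C\alpha$ bounds the diameter of every fibre $S_\eta$, $\eta \in \mathcal{M}(X)$, simultaneously.

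I expect the only genuinely substantive step to be the reduction in the first paragraph: recognizing that the common restriction to $C(X)$ lets one replace each admissible $f$ by its part $f' \in \mathcal{L}_1'$ vanishing on the unit space is precisely what keeps the supremum finite within a fibre --- recall that, across different fibres, the unrestricted metric $\rho$ is identically $+\infty$. Once that reduction is in place, the argument is a direct concatenation of the rapid-decay inequality with Lemma~\ref{lem4}, with no further estimates required.
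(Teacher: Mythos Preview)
Your proof is correct and follows essentially the same route as the paper: split $f = f\rvert_X + f'$, use that $\mu$ and $\nu$ agree on $C(X)$, and then combine rapid decay with Lemma~\ref{lem4} to bound $\|f'\|_{\mathrm{red}}$ by $C\alpha\, L_\ell^k(f)$. The only cosmetic difference is that where you argue the diameter bound $2C\alpha$ directly from $|\mu(f')-\nu(f')|\le 2\|f'\|_{\mathrm{red}}$, the paper rephrases the inequality as $\|\tilde f\|^{\sim}\le C\alpha\, L_\ell^k(f)$ in the quotient $C_c(\mathcal G)/C(X)$ and then cites \cite[Proposition~2.2]{Rieffel-1999-Metrics-on-state-spaces}; your explicit computation is exactly the content of that reference.
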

			
			\begin{proof}
				Since $\mathcal G$ has the rapid decay property with respect to $\ell$ and $p,$ there is a constant $C > 0$ such that
				$$\|f\|_{\mathrm {red}} \leq C \|f\|_{2, p, \ell},$$ \vspace{4mm}
				for any $f \in C_c (\mathcal G).$ By Lemma \ref{lem4} there is a constant $\alpha > 0$ such that 
				$$\|f\|_{2,p,\ell} \leq \alpha L_{\ell}^k (f)$$
				for any $f \in C_c (\mathcal G)$ with $f \rvert_{X} \equiv 0.$ Let $f \rvert_{X}$ be the extension of the restriction of $f$ to $X$ by zero. Since $X$ is compact and clopen in $\mathcal G,$ $f \rvert_{X} \in C_c (\mathcal G)$ and $\left (f - f \rvert_{X} \right ) \big\rvert_{X} \equiv 0.$ So for any $f \in C_c (\mathcal G)$ we have
				$$\left \|f - f \rvert_{X} \right \|_{\mathrm {red}} \leq C \alpha L_{\ell}^k \left (f - f \rvert_{X} \right ) = C \alpha L_{\ell}^k (f).$$
				Thus
				\vspace{2mm}
				$$\left \|\tilde f \right \|^{\sim} \leq C \alpha L_{\ell}^k (f),$$
				for any $f \in C_c (\mathcal G),$ where $\|\cdot\|^{\sim}$ is the quotient norm on the quotient space $C_c (\mathcal G)/ C(X)$ with respect to the reduced norm $\|\cdot\|_{\mathrm {red}}$ on $C_c (\mathcal G).$ Hence the result follows \cite[Proposition 2.2]{Rieffel-1999-Metrics-on-state-spaces}.
			\end{proof}

			\begin{prop}\label{prop20}
				Let $\Gamma$ be a discrete group acting on a compact Hausdorff space $X.$ Let $\mathcal G : = \Gamma \ltimes X$ be the associated transformation groupoid equipped with a continuous proper length function $\ell.$ Suppose that $\mathcal G$ has the property of rapid decay with respect to the length function $\ell.$ Then the set $\mathcal L_1' : = \left \{f \in C_c (\mathcal G)\ :\ L_{\ell}^k(f) \leq 1,\ f\rvert_X \equiv 0 \right \}$ is totally bounded in $C_r^{\ast} (\mathcal G)$ with respect to the reduced norm $\|\cdot\|_{\mathrm {red}}$ for some $k \geq 1$ if and only if $X$ is finite.
			\end{prop}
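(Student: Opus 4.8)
The plan is to prove the two implications separately, both resting on a single computation: functions supported on one ``group slice'' $\{g_0\}\times X$ are transparent. Fix a non-unit element $g_0\in\Gamma$ (we assume $\Gamma\neq\{e\}$; if $\Gamma=\{e\}$ then $\mathcal G=\mathcal G^{(0)}$ and $\mathcal L_1'=\{0\}$, an uninteresting degeneracy), and for $\phi\in C(X)$ let $f_\phi\in C_c(\mathcal G)$ be supported on $\{g_0\}\times X$ with $f_\phi(g_0,x)=\phi(x)$. Since $\mathcal E$ is the multiplicative identity, $\lambda(f_\phi)(\mathcal E)=f_\phi$, and the convolution formula gives $\|\lambda(f_\phi)\xi\|^{2}=\sup_{x\in X}\sum_{h\in\Gamma}|\phi(h\cdot x)|^{2}|\xi(h,x)|^{2}\leq\|\phi\|_\infty^{2}\|\xi\|^{2}$, so together with $\|\mathcal E\|=1$ and $\|f_\phi\|=\|\phi\|_\infty$ one gets $\|f_\phi\|_{\mathrm{red}}=\|\phi\|_\infty$. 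Similarly, specialising the formula~\eqref{deltakformula} yields $\Delta^{k}(f_\phi)(\xi)(g_0 h,x)=(\ell(g_0h,x)-\ell(h,x))^{k}\phi(h\cdot x)\xi(h,x)$, and the subadditivity bound $|\ell(g_0h,x)-\ell(h,x)|\leq\ell(g_0,h\cdot x)$ gives $L_\ell^{k}(f_\phi)=\|\Delta^{k}(f_\phi)\|_{\mathrm{adj}}\leq\sup_{y\in X}\ell(g_0,y)^{k}|\phi(y)|$.

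For the ``only if'' direction I argue by contraposition and fix an arbitrary $k\geq1$. Suppose $X$ is infinite. Because $g_0\neq e$ and $\ell$ vanishes exactly on $\mathcal G^{(0)}$, the function $y\mapsto\ell(g_0,y)^{k}$ is continuous and strictly positive on the compact space $X$, hence bounded above by some $w_{\max}\in(0,\infty)$. Then every $\phi$ with $\|\phi\|_\infty\leq w_{\max}^{-1}$ satisfies $L_\ell^{k}(f_\phi)\leq w_{\max}\|\phi\|_\infty\leq1$ and $f_\phi\rvert_X\equiv0$, so the set $\{f_\phi:\|\phi\|_\infty\leq w_{\max}^{-1}\}\subseteq\mathcal L_1'$ is an isometric copy (for $\|\cdot\|_{\mathrm{red}}$ against $\|\cdot\|_\infty$) of a ball of positive radius in $C(X)$. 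As $X$ is infinite, $C(X)$ is infinite dimensional, so by Riesz's theorem this ball is not totally bounded; hence neither is $\mathcal L_1'$, for any $k$.

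For the ``if'' direction, assume $X$ is finite and fix an integer $k>p$, where $p$ is the rapid-decay exponent with constant $C$. Given $\varepsilon>0$, I split each $f\in\mathcal L_1'$ along group slices: for $n\in\mathbb N$ properness provides a finite set $\Gamma_n\subseteq\Gamma$ with $\{(g,x):\ell(g,x)\leq n\}\subseteq\Gamma_n\times X$, and I write $f=f_{\leq n}+f_{>n}$ with $f_{\leq n}$, $f_{>n}$ the restrictions of $f$ to the clopen sets $\Gamma_n\times X$ and its complement (both lie in $C_c(\mathcal G)$ since $\mathrm{supp}(f)\subseteq F\times X$ for a finite $F$). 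On $\mathrm{supp}(f_{>n})$ one has $\ell>n$, so choosing $q$ with $p<q<k$ and applying Lemma~\ref{lem4} at exponent $q$ gives
\[
\|f_{>n}\|_{2,p,\ell}^{2}\leq(1+n)^{2(p-q)}\|f\|_{2,q,\ell}^{2}\leq(1+n)^{2(p-q)}\alpha_q^{2},
\]
whence $\|f_{>n}\|_{\mathrm{red}}\leq C(1+n)^{p-q}\alpha_q\to0$ uniformly over $\mathcal L_1'$; fix $n$ making this $<\varepsilon/2$. The heads $\{f_{\leq n}:f\in\mathcal L_1'\}$ all lie in the space of functions supported on $\Gamma_n\times X$, which is finite dimensional exactly because $X$ is finite, and they are uniformly bounded there since $\|f_{\leq n}\|_{2,p,\ell}\leq\|f\|_{2,p,\ell}\leq\alpha L_\ell^{k}(f)\leq\alpha$ by Lemma~\ref{lem4}. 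A bounded subset of a finite-dimensional normed space is totally bounded, so finitely many $\|\cdot\|_{\mathrm{red}}$-balls of radius $\varepsilon/2$ cover the heads; combined with the tail estimate, their centres form a finite $\varepsilon$-net for $\mathcal L_1'$.

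The main obstacle is the ``if'' direction, and within it the uniform smallness of the tail: the plain bound $\|f\|_{2,p,\ell}\leq\alpha$ does not by itself force $\|f_{>n}\|_{\mathrm{red}}\to0$, so one must run Lemma~\ref{lem4} at a strictly larger exponent $q\in(p,k)$ to extract the decaying factor $(1+n)^{p-q}$, and one must truncate along group slices (rather than level sets of $\ell$) to keep the pieces continuous and compactly supported. The hypothesis that $X$ is finite enters only at the last step, collapsing the bounded-length head to a finite-dimensional space where boundedness upgrades to total boundedness; the ``only if'' direction shows this collapse is precisely what total boundedness demands.
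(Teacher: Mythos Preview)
Your proof is correct and follows essentially the same strategy as the paper: for the ``only if'' direction you embed a ball of $C(X)$ into $\mathcal L_1'$ via functions supported on a single group slice (the paper uses a finite $\Gamma'\subset\Gamma\setminus\{e\}$ and the inequality $\|\cdot\|_\infty\leq\|\cdot\|_{\mathrm{red}}$, while you compute $\|f_\phi\|_{\mathrm{red}}=\|\phi\|_\infty$ directly), and for the ``if'' direction you use the same head/tail split along $\Gamma_n\times X$, the paper bounding the tail via the direct estimate $(1+\ell)^{2p}\leq 2^{2p}n^{2p-2k}\ell^{2k}$ and you via an intermediate exponent $q\in(p,k)$ together with Lemma~\ref{lem4}. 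These are minor tactical variations on the same argument.
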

			
			\begin{proof}
				To prove the forward implication, we assume on the contrary, that $X$ is infinite. Since $\|f\|_{\infty} \leq \|f\|_{\mathrm {red}},$ for any $f \in C_c (\mathcal G)$ (see~\cite[Lemma 5.6.12.]{brownc}), in order to show that $\mathcal L_1'$ is not totally bounded in $C_r^{\ast} (\mathcal G)$ with respect to the reduced norm $\|\cdot\|_{\mathrm {red}},$ it suffices to show that $\mathcal L_1'$ is not totally bounded with respect to the sup norm $\|\cdot\|_{\infty}$ as a subset of $C_c (\mathcal G).$
				
				\vspace{2mm}
				
				Let $\Gamma'$ be a finite subset of $\Gamma$ not containing the identity $e \in \Gamma.$ Then $\Gamma' \times X$ is a compact subset of $\mathcal G.$ Equip $C (\Gamma' \times X)$ with the sup norm $\|\cdot\|_{\infty}.$ Since $X$ is infinite, $C(\Gamma' \times X)$ is infinite dimensional. Hence there exists a sequence $\{h_m\}_{m \geq 1}$ in $C (\Gamma' \times X)$ with $\|h_m\|_{\infty} = 1$ for all $m \in \mathbb N$ such that $$\|h_p - h_q\|_{\infty} \geq \frac {1} {2},$$ for all $p, q \in \mathbb N.$ For each $m \in \mathbb N,$ extend $h_m$ as a function $\widetilde {h_m}$ on $\mathcal G$ by zero. Since $X$ is clopen and compact, it follows that $\widetilde {h_m} \in C_c (\mathcal G)$ for all $m \in \mathbb N.$ Clearly, $\left \|\widetilde {h_m} \right \|_{\infty} = \|h_m\|_{\infty} = 1$ for all $m \in \mathbb N$ and $$\left \|\widetilde {h_p} - \widetilde {h_q} \right \|_{\infty} = \|h_p - h_q\|_{\infty} \geq \frac {1} {2},$$ for all $p, q \in \mathbb N.$
				
				\vspace{2mm}
				
				Now note that since $\ell$ is continuous and proper, we have $$d_k : = \max \left \{\left \|\ell \rvert_{\Gamma' \times X} \right \|_{\infty}^k, \left \|\ell \rvert_{\Gamma'^{-1} \times X} \right \|_{\infty}^k \right \} < \infty,$$ where $\Gamma'^{-1} : = \{g^{-1} : g \in \Gamma' \}.$  Therefore, $$L_{\ell}^{k} (f) \leq \left \|M_{\ell}^{k} (f) \right \|_{I} \leq d_k \left \lvert \Gamma' \right \rvert \|f\|_{\infty},$$ for any $f \in C_c (\mathcal G),$ where $\left \lvert \Gamma' \right \rvert$ denotes the cardinality of $\Gamma'.$ In particular, $$L_{\ell}^{k} \left (\widetilde {h_m} \right ) \leq d_k \left \lvert \Gamma' \right \rvert : = d_k',$$ for all $m \in \mathbb N.$ Also since $\Gamma'$ does not contain the identity, it follows that $$\widetilde {h_m} \big \rvert_{X} = 0,$$ for all $m \in \mathbb N.$ Thus $\frac {\widetilde {h_m}} {d_k'} \in \mathcal L_1',$ for all $m \in \mathbb N$ and for all $p, q \in \mathbb N,$ $$\left \|\frac {\widetilde {h_p}} {d_k'} - \frac {\widetilde {h_q}} {d_k'} \right \|_{\infty} = \frac {1} {d_k'} \left \|\widetilde {h_p} - \widetilde {h_q} \right \|_{\infty} \geq \frac {1} {2d_k'}.$$ This shows that $\mathcal L_1'$ is not pre-compact in $\left (C_c (\mathcal G), \|\cdot\|_{\infty} \right )$ and hence not totally bounded as a subset of $C_c (\mathcal G)$ with respect to the sup 
				norm $\|\cdot\|_{\infty},$ as required.

				The proof of the converse direction is essentially an adaptation of the proof of \cite[Theorem 3.5]{Long-Wu-2017-Twisted-group-C-alg-as-CQMS}. To prove the converse let us consider the set  
				$$F_n : = \left \{(g, x)\ :\ \ell (g, x) \leq n \right \},$$ for $n \geq 1.$
				Since $\ell$ is proper, there is a finite subset $\Gamma_n \subseteq \Gamma$ such that $F_n \subseteq \Gamma_n \times X.$ Let $\chi_n$ be the characteristic function of $\Gamma_n \times X.$ Then clearly $\chi_n \in C_c (\mathcal G),$ since $\Gamma$ is discrete and $X$ is clopen. For each $n \geq 1,$ let us consider the set
				$$B^{(n)} : = \left \{f \chi_n\ :\ f \in \mathcal L_1' \right \}.$$ Since $\mathcal G$ has the rapid decay property with respect to $\ell$ and $p > 0,$ there exists $C > 0$ such that $$\|f\|_{\mathrm {red}} \leq C \|f\|_{2,p, \ell}.$$ Since $f \rvert_{X} \equiv 0$ for all $f \in \mathcal L_1',$ invoking Lemma \ref{lem4} we have $$\|f \chi_n\|_{\infty} \leq \|f\|_{\infty} \leq \|f\|_{\mathrm {red}} \leq C \|f\|_{2,p, \ell} \leq C \alpha.$$
				 So $B^{(n)}$ is a bounded set in the finite dimensional normed space $\left (C (\Gamma_n \times X), \| \cdot \|_{\infty} \right ).$ Hence $B^{(n)}$ is totally bounded with respect to the sup norm $\|\cdot\|_{\infty}.$ Since any two norms on a finite dimensional space are equivalent, it follows that $B^{(n)}$ is totally bounded with respect to the reduced norm $\|\cdot\|_{\mathrm {red}}$ as well.

				Therefore, in order to prove that $\mathcal L_1'$ is totally bounded it is sufficient to show that for any $\varepsilon > 0$ there exists a natural number $n_0$ such that for all $n \geq n_0$ and for all $f \in \mathcal L_1'$
				$$\left\|f (1 - \chi_n) \right \| < \varepsilon.$$
				
				Fix some $f \in \mathcal L_1'$ arbitrarily. First assume that $\left \|f \left (1 - \chi_n \right ) \right \|_{2, p, \ell} = \left \|f \left (1 - \chi_n \right ) \right \|_{2, p, s, \ell}.$ Then invoking rapid decay property we can get hold of some $C > 0$ such that
				\Bea
				\left \|f \left (1 - \chi_n \right ) \right \|_{\mathrm {red}}^2 & \leq & C \sup\limits_{x \in X} \sum\limits_{g \in \Gamma} \left \lvert f (g, x) \right \rvert^2 \left (1 - \chi_n \right ) (g, x) (1 + \ell (g, x))^{2p} \\ & = & C \sup\limits_{x \in X} \sum\limits_{g \in \Gamma \setminus \Gamma_n} \left \lvert f (g, x) \right \rvert^2 (1 + \ell (g, x))^{2p}.  
				\Eea
				Also, if $\ell (g, x) \geq n \geq 1,$ then for all $k \geq k_0 : = \lfloor p \rfloor + 1$ we have 
				$$(1 + \ell (g, x))^{2p} \leq 2^{2p} (\ell (g, x))^{2p} \leq 2^{2p} n^{2p - 2k} (\ell(g, x))^{2k}.$$
				Then similar estimation as in \cite[Lemma 3.3]{Long-Wu-2017-Twisted-group-C-alg-as-CQMS} yields
				%\bea
				%\left \|f \left (1 - \chi_n \right ) \right \|_{\mathrm {red}}^2 & \leq & 2^{2p} n^{2p - 2k} \sup\limits_{x \in X} \sum\limits_{g \in \Gamma \setminus \Gamma_n} \left \lvert f (g, x) \right \rvert^2 (\ell (g, x))^{2k} \nonumber \\ & \leq & 2^{2p} n^{2p - 2k} \left \|\Delta^k (f) (\mathcal E) \right \|_{\mathrm {adj}}^2 \nonumber \\ & \leq & 2^{2p} n^{2p - 2k} \left (L_{\ell}^k (f) \right )^2 \nonumber \\ & \leq & 2^{2p} n^{2p - 2k}, \nonumber
				%\eea
				%for all $n \in \mathbb N.$
				$$\left \|f \left (1 - \chi_n \right ) \right \|_{\mathrm {red}} < \varepsilon,$$ for sufficiently large $n$ and for any $f \in \mathcal L_1',$ whenever $\left \|f \left (1 - \chi_n \right ) \right \|_{2, p, \ell} = \left \|f \left (1 - \chi_n \right ) \right \|_{2, p, s, \ell}.$ Similarly, whenever $\left \|f \left (1 - \chi_n \right ) \right \|_{2, p, \ell} = \left \|f \left (1 - \chi_n \right ) \right \|_{2, p, r, \ell},$ invoking rapid decay property we can ensure that $\left \|f \left (1 - \chi_n \right ) \right \|_{\mathrm {red}} < \varepsilon$ for sufficiently large $n$, completing the proof.
			\end{proof}
			
			An immediate corollary is the following.
			
			\begin{cor}\label{cor20}
				Let $\Gamma$ be a discrete group acting on a finite set $X.$ Let $\mathcal G : = \Gamma \ltimes X$ be the associated transformation groupoid equipped with a proper length function $\ell.$ If $\mathcal G$ has the rapid decay property with respect to the length function $\ell$ and $p > 0,$ then the metric $\rho_{L_{\ell}^k}$ induces weak$^{\ast}$-topology on $S_{\eta},$ for all $k > p$ and for any probability measure $\eta$ on $X.$ Therefore, $\left (C_r^{\ast} (\mathcal G), L^k_{\ell} \right )$ is a quasi compact quantum metric space for all $k > p.$    
			\end{cor}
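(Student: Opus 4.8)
The plan is to obtain the corollary as a direct synthesis of Proposition~\ref{prop20} and Lemma~\ref{lem20}, since all of the substantive analytic work has already been carried out in those two results. First I would record that a finite set $X$ is automatically compact in the discrete topology, so the transformation groupoid $\mathcal G = \Gamma \ltimes X$ of Example~\ref{ex2} is an étale groupoid whose unit space $\mathcal G^{(0)} \simeq X$ is compact. This places us squarely within the framework of the quasi-seminorms $L_{\ell}^k$ of Definition~\ref{defn12}, each of which is a Lipschitz quasi-seminorm on $C_r^{\ast}(\mathcal G)$ with kernel exactly $C(\mathcal G^{(0)}) = C(X)$, so that the hypotheses of Lemma~\ref{lem20} concerning the kernel are met.

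The central step is to invoke the converse direction of Proposition~\ref{prop20}. Since $X$ is finite and $\mathcal G$ has the rapid decay property with respect to $\ell$ and $p$, that proposition guarantees that the set $\mathcal L_1' = \{f \in C_c(\mathcal G) : L_{\ell}^k(f) \leq 1,\ f\rvert_X \equiv 0\}$ is totally bounded in $C_r^{\ast}(\mathcal G)$ with respect to $\|\cdot\|_{\mathrm{red}}$. Here I would take care over the quantifier on $k$: although Proposition~\ref{prop20} is phrased for ``some $k \geq 1$,'' its converse half is established for every integer $k \geq \lfloor p \rfloor + 1$, and for integers this is precisely the condition $k > p$ required by the corollary. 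Matching these admissible exponents is the only point that demands genuine attention, as the hard estimates built on Lemma~\ref{lem4} and the rapid decay property have already been absorbed into Proposition~\ref{prop20}.

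With the total boundedness of $\mathcal L_1'$ in hand, I would then apply Lemma~\ref{lem20} with $L = L_{\ell}^k$ and $C(\mathcal G^{(0)}) \subseteq \ker(L_{\ell}^k)$: the lemma yields that, for every state $\eta$ on $\ker(L_{\ell}^k) = C(X)$, the $\rho_{L_{\ell}^k}$-topology on $S_{\eta}$ coincides with the weak$^\ast$-topology. By the Riesz representation theorem the states on $C(X)$ are exactly the probability measures $\eta$ on $X$, so this is the assertion of the corollary for all $k > p$. Finally, the coincidence of the two topologies on every fibre $S_{\eta}$ is, by Definition~\ref{def10}, exactly the statement that $\bigl(C_r^{\ast}(\mathcal G), L_{\ell}^k\bigr)$ is a quasi-compact quantum metric space. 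I do not anticipate any serious obstacle beyond the bookkeeping on $k$ described above, which is why the result is indeed an immediate corollary.
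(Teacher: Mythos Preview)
Your proposal is correct and follows exactly the route the paper takes: the paper's proof is the single line ``The proof immediately follows from Lemma~\ref{lem20} and Proposition~\ref{prop20},'' and you have simply unpacked this, including the (accurate) observation that the converse half of Proposition~\ref{prop20} in fact establishes total boundedness of $\mathcal L_1'$ for every integer $k \geq \lfloor p \rfloor + 1$, i.e.\ for all $k > p$.
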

			
			\begin{proof}
				The proof immediately follows from Lemma \ref{lem20} and Proposition \ref{prop20}.
			\end{proof}
			\begin{rem}
				Note that the sufficient condition in Proposition \ref{prop20} is not necessary so that when $X$ has infinitely many points in the transformation groupoid $\mathcal G$, we cannot conclude anything from  Proposition \ref{prop20}. The difficulty in proving the necessity of the condition in Lemma \ref{lem20} has already been noted in \cite[Theorem 1.8]{Rieffel-1998-Metrics-on-state-from-action-of-cmpt-gp}. To prove that the condition necessary one needs an inequality like \cite[Condition 1.5]{Rieffel-1998-Metrics-on-state-from-action-of-cmpt-gp}. This is because after working in the function space one needs to go back to the $C^{\ast}$-algebra. We conjecture that in general for $C^{\ast}_{r}(\mathcal G)$ even when $\mathcal G$ is some transformation groupoid, the inequality mentioned in \cite[Condition 1.5]{Rieffel-1998-Metrics-on-state-from-action-of-cmpt-gp} fails.
			\end{rem}
			
			{\bf Acknowledgment}: The first author acknowledges the financial support under the Junior Research Fellowship Scheme funded by UGC. The third author acknowledges the support from the SERB MATRICS grant (MTR/2022/000515).

			%\begin{lem}
			%Let $\mathcal G$ be an étale groupoid and $\rho_{\ell}^k$ be the metric on $S \left (C_r^{\ast} (\mathcal G) \right )$ defined by $$\rho_{\ell}^k(\mu, \nu) : = \sup \left \{\left \lvert \mu (f) - \nu (f) \right \rvert\ |\ f \in C_r^{\ast} (\mathcal G),\ L_{\ell}^k (f) \leq 1 \right \}, \quad \mu, \nu \in S \left (C_r^{\ast} (\mathcal G) \right ),$$ induced by the Lipschitz seminorm $L_{\ell}^k.$ Then for any pair of distinct probability measures $\mu$ and $\eta$ on $X$ we have $\rho_{\ell}^k (\varphi, \psi) = \infty,$ for any $\varphi \in S_{\mu}$ and $\psi \in S_{\eta}.$ 
			%\end{lem}
			
			%\begin{proof}
			%Since $\mu$ and $\eta$ are distinct probability measures on $X,$ there exists $f \in C(X)$ such that $\mu (f) \neq \nu (f).$ Let $\left \lvert \mu (f) - \eta (f) \right \rvert : = d > 0.$ Let $M > 0$ be arbitrary. Choose a natural number $N \in \mathbb N$ such that $Nd > M.$ Now consider the function $g \in C(X)$ defined by $g = Nf.$ Then $\left \lvert \mu (g) - \eta (g) \right \rvert = Nd > M$ and consequently $\left \lvert \varphi (g) - \psi (g) \right \rvert > M$ for $\varphi \in S_{\mu}$ and $\psi \in S_{\eta}.$ Since $L_{\ell}^k (g) = 0,$ it follows that $\rho_{\ell}^k (\varphi, \psi) > M.$ Since $M > 0$ was arbitrary we have $\rho_{\ell}^k (\varphi, \psi) = \infty,$ as required. 
			%\end{proof}

			\bibliographystyle{plainnat}
			\bibliography{References}

\begin{thebibliography}{17}
\providecommand{\natexlab}[1]{#1}
\providecommand{\url}[1]{\texttt{#1}}
\expandafter\ifx\csname urlstyle\endcsname\relax
  \providecommand{\doi}[1]{doi: #1}\else
  \providecommand{\doi}{doi: \begingroup \urlstyle{rm}\Url}\fi

\bibitem[Brown and Ozawa(2008)]{brownc}
N.P. Brown and N.~Ozawa.
\newblock \emph{$C^*$-algebras and Finite-dimensional Approximations}.
\newblock Graduate {S}tudies in {M}athematics. American Mathematical Soc., 2008.
\newblock ISBN 9780821872505.
\newblock URL \url{https://books.google.co.in/books?id=F_kjj0teG2IC}.

\bibitem[Connes(1989)]{Connes-1989-Compact-metric-sp-Fred-mod-Hyperfinteness}
A.~Connes.
\newblock {C}ompact metric spaces, {F}redholm modules, and hyperfiniteness.
\newblock \emph{Ergodic Theory Dynam. Systems}, 9\penalty0 (2):\penalty0 207--220, 1989.
\newblock ISSN 0143-3857,1469-4417.
\newblock \doi{10.1017/S0143385700004934}.
\newblock URL \url{https://doi.org/10.1017/S0143385700004934}.

\bibitem[Kyed and Kaad(2025)]{DKJK}
D.~Kyed and J.~Kaad.
\newblock \emph{The quantum metric structure of quantum SU(2)}, volume~18.
\newblock European Mathematical Society Publishing House, Switzerland, 2025.
\newblock ISBN 978-3-98547-091-4.

\bibitem[Kyed and Nest(2024)]{https://doi.org/10.1112/blms.12930}
D.~Kyed and R.~Nest.
\newblock Finiteness of metrics on state spaces.
\newblock \emph{Bulletin of the London Mathematical Society}, 56\penalty0 (1):\penalty0 288--295, 2024.
\newblock \doi{https://doi.org/10.1112/blms.12930}.
\newblock URL \url{https://londmathsoc.onlinelibrary.wiley.com/doi/abs/10.1112/blms.12930}.

\bibitem[Latr{\'e}moli{\`e}re(2016)]{FL}
F.~Latr{\'e}moli{\`e}re.
\newblock The quantum {G}romov-{H}ausdorff propinquity.
\newblock \emph{Trans. Amer. Math. Soc.}, 368:\penalty0 365 -- 411, 2016.
\newblock URL \url{https://doi.org/10.1090/tran/6334}.

\bibitem[Latr{\'e}moli{\`e}re(2018)]{Latrmolire2018TheGP}
F.~Latr{\'e}moli{\`e}re.
\newblock The {G}romov-{H}ausdorff propinquity for metric spectral triples.
\newblock \emph{Advances in Mathematics}, 2018.
\newblock URL \url{https://api.semanticscholar.org/CorpusID:119161930}.

\bibitem[Long and Wu(2017)]{Long-Wu-2017-Twisted-group-C-alg-as-CQMS}
B.~Long and W.~Wu.
\newblock Twisted {G}roup {$C^*$}-{A}lgebras as {C}ompact {Q}uantum {M}etric {S}paces.
\newblock \emph{Results Math.}, 71\penalty0 (3-4):\penalty0 911--931, 2017.
\newblock ISSN 1422-6383,1420-9012.
\newblock \doi{10.1007/s00025-016-0562-7}.
\newblock URL \url{https://doi.org/10.1007/s00025-016-0562-7}.

\bibitem[Ozawa and Rieffel(2005)]{Ozawa_Rieffel_2005}
N.~Ozawa and M.~A. Rieffel.
\newblock Hyperbolic {G}roup {$C^*$}-{A}lgebras and {F}ree-{P}roduct {$C^*$}-{A}lgebras as {C}ompact {Q}uantum {M}etric {S}paces.
\newblock \emph{Canadian Journal of Mathematics}, 57\penalty0 (5):\penalty0 1056–1079, 2005.
\newblock \doi{10.4153/CJM-2005-040-0}.

\bibitem[Quaegebeur and Sabbe(2010)]{Quaegebeur2010IsometricCO}
J.~Quaegebeur and M.~Sabbe.
\newblock Isometric coactions of compact quantum groups on compact quantum metric spaces.
\newblock \emph{Proceedings - Mathematical Sciences}, 122:\penalty0 351 -- 373, 2010.
\newblock URL \url{https://api.semanticscholar.org/CorpusID:119168337}.

\bibitem[Renault(2006)]{renault2006groupoid}
J.~Renault.
\newblock \emph{A Groupoid Approach to {$C^*$}-Algebras}.
\newblock Lecture Notes in Mathematics. Springer Berlin Heidelberg, 2006.
\newblock ISBN 9783540392187.
\newblock URL \url{https://books.google.co.in/books?id=e_Z6CwAAQBAJ}.

\bibitem[Rieffel(1998)]{Rieffel-1998-Metrics-on-state-from-action-of-cmpt-gp}
M.~A. Rieffel.
\newblock Metrics on states from actions of compact groups.
\newblock \emph{Doc. Math.}, 3:\penalty0 215--229, 1998.
\newblock ISSN 1431-0635,1431-0643.

\bibitem[Rieffel(1999)]{Rieffel-1999-Metrics-on-state-spaces}
M.~A. Rieffel.
\newblock Metrics on {S}tate {S}paces.
\newblock \emph{Doc. Math.}, 4:\penalty0 559--600, 1999.
\newblock ISSN 1431-0635,1431-0643.

\bibitem[Rieffel(2002)]{Rieffel-2002-Group-C-alg-as-compact-quan-metr-sp}
M.~A. Rieffel.
\newblock Group {$C^*$}-algebras as compact quantum metric spaces.
\newblock \emph{Doc. Math.}, 7:\penalty0 605--651, 2002.
\newblock ISSN 1431-0635,1431-0643.

\bibitem[Rieffel(2000)]{Article}
M.A. Rieffel.
\newblock Gromov-{H}ausdorff {D}istance for {Q}uantum {M}etric {S}paces.
\newblock \emph{Memoirs of the American Mathematical Society}, 168, 2000.
\newblock \doi{10.1090/memo/0796}.

\bibitem[Sims et~al.(2020)Sims, Szab{\'o}, and Williams]{sims2020operator}
A.~Sims, G.~Szab{\'o}, and D.~Williams.
\newblock \emph{Operator Algebras and Dynamics: Groupoids, Crossed Products, and Rokhlin Dimension}.
\newblock Advanced Courses in Mathematics - CRM Barcelona. Springer International Publishing, 2020.
\newblock ISBN 9783030397135.
\newblock URL \url{https://books.google.co.in/books?id=BdfsDwAAQBAJ}.

\bibitem[Weygandt(2024)]{Weygandt-2024-Rapid-decay-for-etale-gpd}
A.~Weygandt.
\newblock Rapid decay for principal \'etale groupoids.
\newblock \emph{New York J. Math.}, 30:\penalty0 956--978, 2024.
\newblock ISSN 1076-9803.

\bibitem[Williams(2019)]{williams2019tool}
D.P. Williams.
\newblock \emph{A Tool Kit for Groupoid {$C^*$} -Algebras}.
\newblock Mathematical Surveys and Monographs. American Mathematical Society, 2019.
\newblock ISBN 9781470451332.
\newblock URL \url{https://books.google.co.in/books?id=5q6xDwAAQBAJ}.

\end{thebibliography}

		\end{document}